\newtheorem{theorem}{Theorem}
\newtheorem{lemma}{Lemma}
\newtheorem{definition}{Definition}
\newtheorem{remark}[theorem]{Remark}
\newtheorem{proposition}[theorem]{Proposition}
\let\pa\partial
\newcommand{\T}{\mathcal{T}}
\let\r\rho
\title{Derivation and analysis of continuum models for crossing pedestrian traffic} 
\author{Sabine Hittmeir$^*$}
\author{Helene Ranetbauer$^\ddagger$}
\author{Christian Schmeiser$^\S$}
\author{Marie-Therese Wolfram$^\P$}
\begin{document}
\maketitle
\renewcommand{\thefootnote}{\fnsymbol{footnote}}
\footnotetext[1]{University of Vienna, Faculty for Mathematics, Oskar-Morgenstern-Platz 1, 1090 Wien, Austria.\\
sabine.hittmeir@univie.ac.at}
\footnotetext[3]{Radon Institute for Computational and Applied Mathematics, 
Austrian Academy of Sciences, Altenberger Strasse 69, 4040 Linz, Austria.\\
helene.ranetbauer@ricam.oeaw.ac.at}
\footnotetext[4]{University of Vienna, Faculty for Mathematics, Oskar-Morgenstern-Platz 1, 1090 Wien, Austria\\
christian.schmeiser@univie.ac.at}
\footnotetext[5]{Mathematics Institute, University of Warwick, Coventry CV4 7AL, UK and Radon Institute for Computational and Applied Mathematics, Austrian Academy of Sciences, Altenberger Strasse 69, 4040 Linz, Austria.\\
m.wolfram@warwick.ac.uk, mt.wolfram@ricam.oeaw.ac.at}

\renewcommand{\thefootnote}{\arabic{footnote}}

\pagestyle{myheadings}
\thispagestyle{plain}
\markboth{Hittmeir, Ranetbauer, Schmeiser, Wolfram}{Crossing pedestrian traffic}

\begin{abstract}
In this paper we study hyperbolic and parabolic nonlinear partial differential equation models, which describe the evolution of two intersecting pedestrian flows. We assume that individuals avoid collisions by sidestepping, which is encoded in the transition rates of the microscopic 2D model. We formally derive the corresponding mean-field models and prove existence of global weak solutions for the parabolic model. Moreover we discuss stability of stationary states for the corresponding one-dimensional model. Furthermore we illustrate the rich dynamics of both systems with numerical simulations. 
\end{abstract}

\bigskip

\begin{small}
\textbf{Keywords:} Crossing pedestrian traffic; segregation; stability analysis.
\end{small}

\begin{small}
\textbf{AMS Subject Classification:} 35K65, 35K55, 35A01, 35B35
\end{small}

\section{Introduction}

The complex dynamics of large pedestrian crowds attracted the attention of researchers in various scientific fields over the last decades. Starting with empirical observations in the early 1950s, pedestrian research has become an active area of research in physics, transportation processes, computer science and applied mathematics. Especially understanding and modeling the complex interactions among pedestrians has gained importance due to the ongoing development of software packages, which are used increasingly in the design and evaluation of public facilities and environments. \\
The proposed mathematical models describe the dynamics of large pedestrian flows on different levels: either microscopically by considering the motion of each individual or macroscopically by studying the evolution of the overall density distribution. The most prominent microscopic approaches are the Social Force model, see Ref. \cite{HelbingMolnar1995} and cellular automata models, see Ref. \cite{kirchner2002simulation} or Ref. \cite{blue2001cellular}. On the macroscopic level different nonlinear PDE systems, see for example Ref. \cite{carrillo2016improved}, Ref. \cite{burger2016lane}, Ref. \cite{cristiani2011multiscale}, Ref. \cite{maury2010macroscopic}, Ref. \cite{colombo2005pedestrian}, have been proposed to describe the dynamics of the pedestrian density usually based on nonlinear conservation laws. More recently also kinetic and multi-scale models have been used for example to analyze the interactions between the dynamics and social behaviors, cf. Ref. \cite{bellomo2015toward} or to model the interactions of large groups with a certain number of leaders, see Ref. \cite{degond2013vision}. In Ref. \cite{bellomo2013microscale}, it is shown how the interactions at the microscopic scale are transferred to the macroscopic one. In addition we also want to mention a recent paper dealing with a hybrid (macroscopic and kinetic) model for crowd dynamics, where the parameters related to the speed are induced by an emotional state, see Ref. \cite{wang2016efficient}. For a detailed overview on mathematical models for pedestrian dynamics we refer to Ref. \cite{bellomodogbe2011}, Ref. \cite{bellomo2} and Ref. \cite{cristiani2014multiscale}.

In this work we consider two groups - called red and blue individuals - which move from the left to the right and the bottom to the top respectively. Each individual tries to move in its desired direction (either to the right or towards the top), but steps aside to avoid collisions with the other group. We start with a lattice based approach and (formally) derive, analyze and simulate the corresponding PDE systems describing the evolution of these crossing pedestrian flows. The side-stepping behavior results in the formation of complex patterns on the microscopic as well as the macroscopic level. In the case of bidirectional flows, that is two groups walking in opposite direction, we observe the formation of directional lanes, see Ref. \cite{burger2016lane}. In the case of intersecting flows the groups segregate, forming stationary and transient diagonal patterns at the intersection. Similar patterns have been observed on the microscopic level in Ref. \cite{cividini2013diagonal,cividini2013crossing} and in a kinetic model proposed in Ref. \cite{festa2016}. We study the dynamic properties of solutions to the derived PDE models, which are either parabolic or hyperbolic (in certain density regimes). While the parabolic PDE model has a perturbed gradient flow structure, which can be analyzed using similar techniques as proposed in Ref. \cite{burger2016lane}, the PDE system derived by considering the expansion up to order one is hyperbolic in the x and the y direction only. Hence we study a 1D reduction, which has a similar structure and behavior as the models analyzed in Ref. \cite{chertock2014pedestrian} and in Ref. \cite{AppertRolland2011}, and obtain linear stability and local $L^2$ stability in certain density regimes (namely where the overall density is not too high). We would like to mention that related results have been shown for classic traffic flow models for $n$ populations, see Ref. \cite{benzoni2003}.

This paper is organized as follows: we introduce the microscopic modeling setup and the corresponding PDE system in Section \ref{s:mod}. Then we show global in time existence for the full 2D parabolic system in Section \ref{existence} and illustrate the behavior of the model with micro- and macroscopic simulations. In Section \ref{1dmodel} we discuss the dynamics of solutions to a reduced 1D hyperbolic system by studying linear stability and local $L^2$ stability behavior.

\section{Discrete and continuous models for intersecting pedestrian flows}\label{s:mod}

\subsection{A stochastic individual based model on a two-dimensional lattice} \label{individual_model}
We consider an equidistant grid of mesh size $h$ on a periodic box represented by $\Omega = [0,Nh]^2\subseteq \mathbb{R}^2$, where $x=0$ is identified with $x=Nh$, and $y=0$ with $y=Nh$. Each lattice site $(x_i, y_j) = (ih, jh)$, $i,j\in \{0,\ldots N\}$, can be empty, or it can be occupied by either a red or a blue individual. We also introduce discrete times $t_k = k\Delta t$, $k=0,1,\ldots$ with time step $\Delta t$.
The discrete stochastic processes $r^k = (r_{i,j}^k,\,i,j=0,\ldots,N)$, $b^k = (b_{i,j}^k,\,i,j=0,\ldots,N)$, where $r_{i,j}^k, b_{i,j}^k\in \{0,1\}$ with the constraint $\rho_{i,j}^k := r_{i,j}^k + b_{i,j}^k\le 1$
 indicate, if at time $t_k$ the site $(x_i,y_j)$ is occupied by a red individual ($r_{i,j}^k = 1$, $b_{i,j}^k = 0$),
or by a blue individual ($r_{i,j}^k = 0$, $b_{i,j}^k = 1$), or if it is empty ($r_{i,j}^k = b_{i,j}^k = 0$).

The general movement direction for the red individuals is to the right, i.e. in the positive $x$-direction, and for
the blue individuals upwards, i.e. in the positive $y$-direction. Every individual might also step to the side, 
in particular when a forward step is inhibited by a member of the other group. For the red individuals this leads
to the transition probabilities
\begin{align}\label{transprob_r}
\T_r^{i,j\rightarrow i+1,j}(r,b)&= \alpha(1-\rho_{i+1,j}),\nonumber\\
\T_r^{i,j\rightarrow i,j-1}(r,b)&= \alpha(1-\rho_{i,j-1})(\gamma_0+\gamma_1 \, b_{i+1,j}),\nonumber\\
\T_r^{i,j\rightarrow i,j+1}(r,b)&= \alpha(1-\rho_{i,j+1})(\gamma_0+\gamma_2 \, b_{i+1,j}),
\end{align}
with 
$$
\T_r^{i,j\rightarrow i,j} = 1 - \T_r^{i,j\rightarrow i+1,j} - \T_r^{i,j\rightarrow i,j-1} - \T_r^{i,j\rightarrow i,j+1}
$$
and $\T_r^{i,j\to m,n}=0$ for all other $(m,n)$. The nonnegative parameters $\alpha,\gamma_0,\gamma_1,\gamma_2$
satisfy 
\begin{equation}\label{CFL}
 \alpha \max\{1, 2\gamma_0 + \gamma_1 + \gamma_2\} \le 1 \,,
\end{equation}
such that $\T_r^{i,j\rightarrow i,j}\ge 0$ always holds.
Since individuals can only jump into a cell if it is not occupied, all transition probabilities $\T_r^{i,j\to m,n}$ have the 
factor $(1-\rho_{m,n})$.
The assumption that the blue individuals have the same behavior as the red ones, leads to 
\begin{align}\label{transprob_b}
\T_b^{i,j\rightarrow i,j+1}(r,b)&=\alpha(1-\rho_{i,j+1}),\nonumber\\
\T_b^{i,j\rightarrow i-1,j}(r,b)&=\alpha(1-\rho_{i-1,j})(\gamma_0+\gamma_1 \, r_{i,j+1}),\nonumber\\
\T_b^{i,j\rightarrow i+1,j}(r,b)&=\alpha(1-\rho_{i+1,j})(\gamma_0+\gamma_2 \, r_{i,j+1}),
\end{align}
The sidestepping probability can be asymmetric, with $\gamma_1>\gamma_2$ describing a tendency to sidestep
against the general movement direction of the other group. Note that all individuals refuse to move backwards.

The stochastic process is completed by prescribing when the jumps are carried out. We use the somewhat artificial
assumption of complete synchronization, where all individuals use the same information on the present state for
making the next move. This means that all individuals use the transition probabilities $\T_r^{i,j\to m,n}(r^k,b^k)$ and
$\T_b^{i,j\to m,n}(r^k,b^k)$ to determine their positions at time $t_{k+1}$.

\subsection{A discrete compartment model}

A related model is based on the assumption that each grid point $(x_i,y_j)$ represents a compartment, possibly
containing many individuals. Now $r_{i,j}^k, b_{i,j}^k\in [0,1]$ denote the fractions of the total available space in
a compartment occupied at time $t_k$ by red and, respectively, blue individuals (again with the obvious restriction 
$r_{i,j}^k+b_{i,j}^k\le 1$). We postulate the deterministic dynamics given by
\begin{align}\label{master_r}
r_{i,j}^{k+1}&=\left(1 - \T_r^{i,j\to i+1,j}(r^k,b^k) - \T_r^{i,j\to i,j-1}(r^k,b^k) - \T_r^{i,j\to i,j+1}(r^k,b^k)\right)r_{i,j}^k\nonumber \\
&\quad+\T_r^{i-1,j\to i,j}(r^k,b^k) r_{i-1,j}^k + \T_r^{i,j+1\to i,j}(r^k,b^k) r_{i,j+1}^k\\
&\quad  +\T_r^{i,j-1\to i,j}(r^k,b^k) r_{i,j-1}^k \,,\nonumber
\end{align}
\begin{align}\label{master_b}
b_{i,j}^{k+1}&=\left(1 - \T_b^{i,j\to i,j+1}(r^k,b^k) - \T_b^{i,j\to i-1,j}(r^k,b^k) - \T_b^{i,j\to i+1,j}(r^k,b^k)\right)b_{i,j}^k\nonumber\\
&\quad+\T_b^{i,j-1\to i,j}(r^k,b^k) b_{i,j-1}^k + \T_b^{i-1,j\to i,j}(r^k,b^k) b_{i-1,j}^k \\
&\quad + \T_b^{i+1,j\to i,j}(r^k,b^k) b_{i+1,j}^k \,,\nonumber
\end{align}
with the transition rates \eqref{transprob_r}, \eqref{transprob_b}. 

In principle, a connection could be made between the two models by passing to expectation values in the 
stochastic model, but the expected transition rates will not satisfy \eqref{transprob_r}, \eqref{transprob_b}
because of the expected strong local correlations.

\subsection{The macroscopic PDE model}\label{derivation_macro}
A continuous model, both in position and time, can be obtained from \eqref{master_r}, \eqref{master_b} by 
interpreting $r_{i,j}^k$ and $b_{i,j}^k$ as approximations for the values $r(x_i,y_j,t_k)$ and $b(x_i,y_j,t_k)$ of continuous functions, and formally
passing to the limit $h,\Delta t\to 0$ (similarly to Ref. \cite{burger2016lane}). We assume that position and time have already been non-dimensionalized and
make the additional assumption $\alpha = \Delta t/h$. Assumption \eqref{CFL} can then be interpreted as a CFL-condition. Division of \eqref{master_r}, \eqref{master_b} by $\Delta t$ and passing to the limit leads to

\begin{align}\label{e:hypsys}
\pa_t r &+\pa_x ((1-\r)r) + (\gamma_2-\gamma_1) \pa_y ((1-\r) br) = 0 \,,\nonumber\\
\pa_t b &+\pa_y ((1-\r)b)  + (\gamma_2-\gamma_1) \pa_x ((1-\r) br) = 0 \,.
\end{align}

The second terms on the left hand sides correspond to the motion in the walking direction (to the right and, respectively, upwards), while the third terms correspond to the side-stepping behavior. 
A natural regularization is obtained by carrying out the Taylor expansions to the second order with respect to position in the right hand
sides of \eqref{master_r}, \eqref{master_b}, to the next order, leading to
\begin{align}\label{system}
\pa_t r & + \nabla \cdot J_r = 0 \,,\nonumber\\
\pa_t b &+\nabla \cdot J_b = 0 \,,
\end{align}
where the flows of red and blue individuals are given by
\[J_r:=\begin{pmatrix}
(1-\r)r-\varepsilon\left[(1-\rho)\pa_x r+r\pa_x \rho\right]\\\\
-(\gamma_1-\gamma_2)(1-\rho)b r-\varepsilon\left[(\gamma_1+\gamma_2)\left((1-\r)\pa_y(rb)+br\pa_y\r\right) \right.\\
\left.+2\gamma_0\left((1-\rho)\pa_y r +r\pa_y\rho\right)+2(\gamma_1-\gamma_2)(1-\r)r\pa_xb  \right]\\
\end{pmatrix},\]
and
\[J_b:=\begin{pmatrix}
-(\gamma_1-\gamma_2)(1-\rho)b r-\varepsilon\left[(\gamma_1+\gamma_2)\left((1-\r)\pa_x(rb)+br\pa_x\r\right)\right.\\
\left. +2\gamma_0\left((1-\rho)\pa_x b +b\pa_x\rho\right)+2(\gamma_1-\gamma_2)(1-\r)b\pa_y r  \right]\\\\
(1-\r)b-\varepsilon\left[(1-\rho)\pa_y b+b\pa_y \rho\right]\\
\end{pmatrix},\]
with $\varepsilon=h/2$. This regularization is related to 'modified equations' as used in numerical analysis to 
understand the qualitative behavior of numerical schemes. However, we have neglected the Taylor expansion
with respect to time, which corresponds to a smallness assumption on the parameter $\alpha$ (which can be
interpreted as a Courant number).

\section{Global existence of the parabolic problem}\label{existence}
In this section, we prove global in time existence for the second order parabolic problem. We consider the system \eqref{system} on $\Omega \times (0,T)$, where $\Omega \subseteq \mathbb{R}^2$ is bounded. In the following, we set w.l.o.g. $\Omega=[0,1]\times[0,1]$. We assume the system to be supplemented with periodic boundary conditions and for simplicity set 
\[\gamma:=\gamma_1=\gamma_2.\] 
All arguments also hold in the case $\gamma_1\neq \gamma_2$, where $\gamma_1-\gamma_2$ is sufficiently small. For details see Remark \ref{remark3.1}. 
In a similar fashion to Ref. \cite{burger2016lane} we define the entropy functional 
\begin{align}\label{entropy}
E:= \varepsilon\int_{\Omega}& r(\log r-1)+b(\log b-1)\,dx\,dy \nonumber\\
&+\varepsilon\int_{\Omega}\frac{1}{2}(1-\rho)(\log (1-\rho)-1)\,dx\,dy+\int_{\Omega} r V_r+ bV_b \, dx \,dy,
\end{align}
where the potentials $V_r(x,y)=-x$ and $V_b(x,y)=-y$ correspond to the motion of the red and blue individuals to the right and the top respectively. As we shall see below, this functional is not an entropy in the strict sense that it is decaying for all times for any solution. Instead it increases at most linearly in time, which still allows us to prove global existence of weak solutions. Introducing the corresponding entropy variables   
\begin{equation*} 
u:=\pa_r E=\varepsilon\log r -\frac{\varepsilon}{2}\log(1-\rho)+ V_r \quad \text{and} \quad v:=\pa_b E=\varepsilon\log b -\frac{\varepsilon}{2}\log(1-\rho)+ V_b,
\end{equation*}
allows us to write system \eqref{system} as
\begin{align}
\label{system_entropy}
\begin{pmatrix}
\pa_t r \\ \pa_t b
\end{pmatrix} &=\begin{pmatrix}
\nabla & 0\\ 0 & \nabla 
\end{pmatrix} \cdot\left( M(r,b)\begin{pmatrix}
\pa_x u\\ \pa_y u \\ \pa_x v \\ \pa_y v
\end{pmatrix}
+\varepsilon\begin{pmatrix}
\frac{r}{2}\pa_x \rho \\ \gamma_0 r \pa_y \rho \\ \gamma_0 b \pa_x \rho \\ \frac{b}{2}\pa_y \rho 
\end{pmatrix}+\begin{pmatrix}
0 \\ 2\gamma rb (1-\rho)\\ 2\gamma rb (1-\rho) \\0
\end{pmatrix}\right),
\end{align}
where
\[M:=M(r,b)=(1-\rho)\begin{pmatrix}
r & 0 & 0 & 0\\
0 & 2r(\gamma_0 +\gamma b) & 0 & 2\gamma rb \\
2\gamma rb & 0 & 2b(\gamma_0 +\gamma r) & 0\\
0 & 0 & 0 & b
\end{pmatrix}.\]

Note that system \eqref{system_entropy} has a similar structure as the PDE model for bidirectional flow studied in Ref. \cite{burger2016lane}. Hence we can use similar arguments to prove existence which we briefly state in the following.\\

We start by showing that the entropy is growing at most linearly in time.

\begin{lemma}\label{lemma_entropy}
Let $r, b :\Omega \rightarrow \mathbb{R}^2$ be a sufficiently smooth solution to system \eqref{system_entropy} for $\frac{1}{8}< \gamma_0 < 1$ satisfying 
\begin{align*}
0\leq r,b \quad \text{ and }\quad \rho\leq 1.
\end{align*}
Then there exists a constant $C\geq 0$ such that
\begin{align}
\label{entropyinequality}
\begin{aligned}
\frac{\mathrm{d}E}{\mathrm{d}t}+\mathcal{D}_0&\leq C,
\end{aligned}
\end{align}
where 
\begin{equation*}
\mathcal{D}_0=\varepsilon^2 C_0\int_{\Omega} (1-\rho)|\nabla\sqrt{r}|^2 +(1-\rho)|\nabla\sqrt{b}|^2+ |\nabla\sqrt{1-\rho}|^2+|\nabla \rho|^2\,dx\,dy,
\end{equation*}
for some constant $C_0>0$.
\end{lemma}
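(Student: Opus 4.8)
The plan is to convert the entropy-variable formulation \eqref{system_entropy} into a differential inequality for $E$. Since $u=\pa_r E$ and $v=\pa_b E$ are the variational derivatives of the functional $E$, a sufficiently smooth solution satisfies $\frac{\mathrm{d}E}{\mathrm{d}t}=\int_\Omega u\,\pa_t r+v\,\pa_t b\,dx\,dy$. Inserting \eqref{system_entropy} and integrating by parts --- all boundary terms vanish by periodicity --- one obtains, with $w:=(\pa_x u,\pa_y u,\pa_x v,\pa_y v)^\top$,
\begin{equation*}
\frac{\mathrm{d}E}{\mathrm{d}t}=-\int_\Omega\Bigl(w^\top M w+\varepsilon\,w\cdot\bigl(\tfrac r2\pa_x\rho,\,\gamma_0 r\pa_y\rho,\,\gamma_0 b\pa_x\rho,\,\tfrac b2\pa_y\rho\bigr)^\top+2\gamma rb(1-\rho)\,(\pa_y u+\pa_x v)\Bigr)dx\,dy .
\end{equation*}
The next step is to express everything through $r$, $b$ and $\rho$ using $\nabla u=\varepsilon\frac{\nabla r}{r}+\frac\varepsilon2\frac{\nabla\rho}{1-\rho}+\nabla V_r$ and the analogous identity for $\nabla v$, together with $\nabla r=2\sqrt r\,\nabla\sqrt r$ and $\nabla\rho=-2\sqrt{1-\rho}\,\nabla\sqrt{1-\rho}$ (as well as the linear relation $\sqrt r\,\nabla\sqrt r+\sqrt b\,\nabla\sqrt b+\sqrt{1-\rho}\,\nabla\sqrt{1-\rho}=0$, a consequence of $r+b+(1-\rho)=1$). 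Collecting terms, one writes the integrand as a part quadratic in $\nabla\sqrt r$, $\nabla\sqrt b$, $\nabla\sqrt{1-\rho}$, a part linear in these gradients (each such term carrying a factor from $\nabla V_r$, $\nabla V_b$, or the cross-coupling $\gamma rb(1-\rho)$), and a part with no gradients at all.

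The heart of the proof is to show that the quadratic part is bounded below by $\mathcal{D}_0$ for a suitable $C_0>0$. It is essential here that the factor $(1-\rho)$ in $M$ cancels the $\frac{1}{1-\rho}$ produced by differentiating $\log(1-\rho)$ in the entropy variables, so that the resulting quadratic form has coefficients that are \emph{bounded} functions of $r$ and $b$; one then proves positivity pointwise, treating the $x$-derivative and $y$-derivative contributions separately, by completing squares and using $0\le\rho\le1$ (so that e.g.\ $rb\le\tfrac14$), the fact that the coefficients degenerate proportionally to $r$, $b$ and $1-\rho$ at the corresponding faces of the physical region, and --- crucially --- the hypothesis $\tfrac18<\gamma_0<1$, which is precisely what makes the relevant principal minors of the reorganised diffusion matrix strictly positive. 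The $\varepsilon$-correction terms and the cross-diffusion terms are not sign-definite on their own, so they must be carried along inside this completion of squares rather than estimated separately.

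It remains to absorb the lower-order terms. Every gradient-linear term is estimated by Young's inequality against an arbitrarily small multiple of $\mathcal{D}_0$; the resulting remainder is a zeroth-order quantity, integrable because $0\le r,b\le1$, $\rho\le1$ and $|\Omega|<\infty$. The purely zeroth-order contributions --- in particular the part of $\frac{\mathrm{d}}{\mathrm{d}t}\int_\Omega rV_r+bV_b\,dx\,dy$ that is not gradient-linear --- are bounded by a constant times the conserved total mass $\int_\Omega(r+b)\,dx\,dy$. Choosing the Young parameters small enough (depending on the fixed $\varepsilon$) so as not to destroy the coercive term, and collecting all estimates, gives \eqref{entropyinequality} with an explicit $C$.

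I expect the pointwise coercivity estimate to be the main obstacle: it is the only step where the particular algebraic structure of $M$ and of the $\varepsilon$-corrections is needed in full, where the degeneracy of the diffusion at the boundary of the physical region must be handled with care, and where the sharp threshold $\tfrac18<\gamma_0<1$ enters. Everything else is bookkeeping of the kind already carried out in Ref.~\cite{burger2016lane}.
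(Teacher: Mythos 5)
Your overall strategy coincides with the paper's: write $\frac{\mathrm{d}E}{\mathrm{d}t}=\int_\Omega u\,\pa_t r+v\,\pa_t b\,dx\,dy$, insert \eqref{system_entropy} and integrate by parts, split the integrand into a quadratic part in the gradients, a gradient-linear part and a zeroth-order part, prove pointwise coercivity of the quadratic part (the only place where $\tfrac18<\gamma_0<1$ enters), and absorb the linear part by Young's inequality into a small multiple of the dissipation plus a bounded remainder. Your insistence that the $\varepsilon$-corrections and the cross-diffusion must be kept inside the quadratic form rather than estimated separately is exactly what the paper does (they become the matrix $N$ and the vector $H$). The one organisational difference is that you propose to expand everything at once in $\nabla\sqrt r,\nabla\sqrt b,\nabla\sqrt{1-\rho}$, whereas the paper keeps the quadratic form in the independent variables $(\nabla u,\nabla v)$, re-expresses $\pa_x\rho,\pa_y\rho$ through them, closes the Young estimates there to get the lower bound $\tilde C(1-\rho)(r|\nabla u|^2+b|\nabla v|^2)$ with $\tilde C=\min(2\gamma_0-\tfrac14,\tfrac12(1-\gamma_0))$, and only at the very end converts to the square-root variables via the exact identity $r(1-\rho)|\nabla\log(r/\sqrt{1-\rho})|^2+b(1-\rho)|\nabla\log(b/\sqrt{1-\rho})|^2=4(1-\rho)|\nabla\sqrt r|^2+4(1-\rho)|\nabla\sqrt b|^2+\rho|\nabla\sqrt{1-\rho}|^2+|\nabla\rho|^2$. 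Your route is workable but heavier; note also that $\sqrt r\,\nabla\sqrt r+\sqrt b\,\nabla\sqrt b+\sqrt{1-\rho}\,\nabla\sqrt{1-\rho}=0$, so the three gradient fields you diagonalise in are linearly dependent and ``positivity of the principal minors'' is not the correct criterion there --- you would need positivity restricted to the constraint subspace, which the paper's choice of variables sidesteps.

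The genuine gap is that the single step carrying all the mathematical content --- the pointwise lower bound on the reorganised quadratic form, and in particular the verification that the interval $(\tfrac18,1)$ for $\gamma_0$ is what makes it close --- is announced but not performed. You assert that $\tfrac18<\gamma_0<1$ is ``precisely'' the positivity threshold without exhibiting the computation that produces those two endpoints, and you yourself flag this as the main obstacle; as written, the reader cannot check that the completion of squares succeeds in the presence of the off-diagonal $2\gamma rb$ entries of $M$ and the $rb$, $2\gamma_0 rb$ entries of $N$, nor identify the constant $C_0$ appearing in $\mathcal D_0$ (in the paper $C_0=\tilde C/8$). The remaining ingredients of your sketch --- periodic integration by parts, Young's inequality against a small multiple of $\mathcal D_0$, boundedness of the zeroth-order terms from $0\le r,b,\rho\le1$, $|\nabla V_r|=|\nabla V_b|=1$ and $|\Omega|<\infty$ --- are routine and correctly described, but they do not substitute for the missing coercivity argument.
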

\smallskip
\begin{proof}
Using \eqref{system_entropy} we deduce the following entropy dissipation relation:
\begin{align}\label{cr_equ_1}
\begin{aligned}
\frac{\mathrm{d}E}{\mathrm{d}t} &=\int_{\Omega} (u\, \pa_t r +v\, \pa_t b ) dx\,dy\\
&=-\int_{\Omega}  M \begin{pmatrix}
\nabla u \\ \nabla v
\end{pmatrix}\cdot \begin{pmatrix}
\nabla u \\ \nabla v
\end{pmatrix}+\left(\varepsilon\begin{pmatrix}
\frac{r}{2}\pa_x \rho \\ \gamma_0r\pa_y \rho\\ \gamma_0 b\pa_x \rho \\ \frac{b}{2}\pa_y \rho 
\end{pmatrix}+\begin{pmatrix}
0 \\ 2\gamma rb (1-\rho)\\ 2\gamma rb (1-\rho) \\0
\end{pmatrix}\right)\cdot \begin{pmatrix}
\nabla u \\ \nabla v
\end{pmatrix}\,dx\,dy.\\
\end{aligned}
\end{align}
In terms of the entropy variables $u$ and $v$, we can rewrite
\[\pa_x \rho=(r\pa_x u+b\pa_x v+r)\frac{2(1-\rho)}{\varepsilon(2-\rho)},\]
and
\[\pa_y \rho=(r\pa_y u+b\pa_y v+b)\frac{2(1-\rho)}{\varepsilon(2-\rho)}.\]
Hence, equation \eqref{cr_equ_1} becomes
\begin{align}\label{cr_equ_3}
\begin{aligned}
\frac{\mathrm{d}E}{\mathrm{d}t}&=-\int_{\Omega}  M \begin{pmatrix}
\nabla u \\ \nabla v
\end{pmatrix}\cdot \begin{pmatrix}
\nabla u \\ \nabla v
\end{pmatrix}+  N(r,b) \begin{pmatrix}
\nabla u \\ \nabla v
\end{pmatrix}\cdot \begin{pmatrix}
\nabla u \\ \nabla v
\end{pmatrix}+H(r,b)\cdot \begin{pmatrix}
\nabla u \\ \nabla v
\end{pmatrix} \,dx\,dy,\\
\end{aligned}
\end{align}
where 

\[N:=N(r,b)=\frac{1-\rho}{2-\rho}\begin{pmatrix}
r^2& 0 & rb & 0\\
0 & 2\gamma_0r^2 & 0 & 2\gamma_0 rb \\
2\gamma_0 rb & 0 & 2\gamma_0 b^2 & 0\\
0 & rb & 0 & b^2
\end{pmatrix},\]
and
\[H:=H(r,b)=\frac{1-\rho}{2-\rho}\begin{pmatrix}
r^2\\ 2\gamma_0 rb\\ 2\gamma_0 rb\\b^2
\end{pmatrix}+\begin{pmatrix}
0 \\ 2\gamma rb (1-\rho)\\ 2\gamma rb (1-\rho) \\0
\end{pmatrix}.\]

Hence, 
\begin{align}\label{cr_equ_4}
\frac{\mathrm{d}E}{\mathrm{d}t}&=-\int_{\Omega} \bigg[ (1-\rho)(r(\pa_x u)^2 +b(\pa_y v)^2) +2\gamma_0(1-\rho)(r(\pa_y u)^2 +b(\pa_x v)^2)\nonumber\\
&\qquad\qquad +2\gamma(1-\rho)rb((\pa_y u)^2+(\pa_x v)^2+\pa_y v\pa_y u+\pa_xu\pa_x v)\bigg]\,dx\,dy\nonumber\\
&\quad -\int_{\Omega} \frac{1-\rho}{2-\rho}\bigg[r^2(\pa_x u)^2+2\gamma_0r^2(\pa_y u)^2+2\gamma_0b^2(\pa_x v)^2+b^2(\pa_y v)^2\nonumber\\
&\quad +rb\pa_x u\pa_x v+2\gamma_0 rb\pa_y u\pa_y v+2\gamma_0 rb \pa_x u\pa_x v+rb\pa_yu\pa_yv \bigg]\,dx\,dy\nonumber\\
&\quad -\int_{\Omega}H(r,b)\cdot \begin{pmatrix}
\nabla u \\ \nabla v
\end{pmatrix} \,dx\,dy.
\end{align}
As $0\leq \gamma,r,b,\rho\leq 1$, we have 
\[2\gamma(1-\rho)rb|\pa_x u \pa_x v|\leq \frac{1-\rho}{2}r(\pa_x u)^2 +2\gamma (1-\rho)rb(\pa_x v)^2,\]
\[ \frac{1-\rho}{2-\rho}rb|\pa_x u\pa_x v|\leq \frac{1-\rho}{2-\rho}r^2 (\pa_x u)^2+\frac{1-\rho}{4}b(\pa_x v)^2,\]
\[ \frac{1-\rho}{2-\rho}2\gamma_0 rb |\pa_x u\pa_x v|\leq 2\gamma_0 \frac{1-\rho}{4}r(\pa_x u)^2+\frac{1-\rho}{2-\rho}2\gamma_0 b^2 (\pa_x v)^2,\]
by Young's inequality. The same holds for the term involving $y$-derivatives. In order to guarantee that all the mixed terms are controlled by the quadratic terms, we have to assume that $\frac{1}{8}<\gamma_0<1$. Hence, the entropy dissipation \eqref{cr_equ_1} reduces to
\begin{equation*}
\frac{\mathrm{d}E}{\mathrm{d}t}\leq   -\int_{\Omega} \tilde{C}(1-\rho)(r|\nabla u|^2+b |\nabla v|^2)+H(r,b)\cdot \begin{pmatrix}
\nabla u \\ \nabla v
\end{pmatrix} \,dx\,dy,
\end{equation*}
where $\tilde{C}=\min(2\gamma_0-\frac{1}{4},\frac{1}{2}(1-\gamma_0))>0$.\\
The linear terms, i.e. the terms arising from the matrix $H(r,b)$, can be controlled by Young's inequality resulting in at most linear growth of the entropy functional. In particular, the first term can be bounded by
\[\int_{\Omega}\frac{(1-\rho)r^2}{2-\rho}|\pa_x u|\,dx\,dy \leq \int_{\Omega} \frac{\tilde{C}}{8}(1-\rho)r (\pa_x u)^2\,dx\,dy +\int_{\Omega}\frac{2(1-\rho)r^2}{\tilde{C}}\,dx\,dy.\]
As $0\leq r,\rho \leq 1$ and the integration is over a bounded domain, we obtain
\begin{equation*}
\frac{\mathrm{d}E}{\mathrm{d}t}\leq   -\int_{\Omega} \frac{\tilde{C}}{2}(1-\rho)(r|\nabla u|^2+b |\nabla v|^2)+\hat{C} \,dx\,dy,\\
\end{equation*}
for some constant $\hat{C}\geq 0$.\\

Using the definitions of $u$ and $v$, applying Young's inequality to estimate the mixed terms involving the potentials as well as the fact that
\begin{align*}
&r(1-\rho)\left|\nabla\left(\log\frac{r}{\sqrt{1-\rho}}\right)\right|^2+b(1-\rho)\left|\nabla\left(\log\frac{b}{\sqrt{1-\rho}}\right)\right|^2\\
=\,&4(1-\rho)\left|\nabla\sqrt{r}\right|^2+4(1-\rho)\left|\nabla\sqrt{b}\right|^2+\rho\left|\nabla\sqrt{1-\rho}\right|^2+\left|\nabla\rho\right|^2,
\end{align*}
we obtain 
\begin{align*}
\frac{\mathrm{d}E}{\mathrm{d}t}&\leq-\frac{\tilde{C}\varepsilon^2}{4}\int_{\Omega} 4(1-\rho)|\nabla\sqrt{r}|^2 +4(1-\rho)|\nabla\sqrt{b}|^2+\rho |\nabla\sqrt{1-\rho}|^2+|\nabla \rho|^2\,dx\,dy\\
&\quad +\frac{\tilde{C}}{2}\int_{\Omega} (1-\rho)(r|\nabla V_r|^2 +b|\nabla V_b|^2)\,dx\,dy+\hat{C}
\end{align*}
Since $|\nabla V_r|^2=|\nabla V_b|^2=1$ and $\rho |\nabla\sqrt{1-\rho}|^2+|\nabla \rho|^2\geq |\nabla\sqrt{1-\rho}|^2,$
we get the estimate

\begin{align}
\label{lf_entropy_ineq2}
\begin{aligned}
\frac{\mathrm{d}E}{\mathrm{d}t}&\leq -\varepsilon^2 C_0\int_{\Omega} (1-\rho)(|\nabla\sqrt{r}|^2 +|\nabla\sqrt{b}|^2)+|\nabla\sqrt{1-\rho}|^2+|\nabla \rho|^2\,dx\,dy+C,
\end{aligned}
\end{align}

for some constant $C\geq 0$ and $C_0=\frac{\tilde{C}}{8}$, which concludes the proof.
\end{proof}

Note that we cannot use the maximum principle to prove nonnegativity and boundedness of $r$, $b$ and $\rho$. Therefore we define the entropy density on the set
\begin{equation}\label{equ:set}
\mathcal{M}=\left\{\begin{pmatrix}
r\\b
\end{pmatrix}\in \mathbb{R}^2:r>0,b>0,r+b<1\right\},
\end{equation}
i.e.
\begin{align*}
h_E&:\mathcal{M}\to\mathbb{R},\\
\begin{pmatrix}
r\\b
\end{pmatrix}\mapsto \varepsilon \bigg(r(\log r-1) + b (\log b-1) &+ \frac{1}{2} (1-\rho)(\log (1-\rho)-1)\bigg)+ r V_r+ bV_b.
\end{align*}
The fact that the corresponding gradient is invertible yields positivity and the appropriate bounds for $r,b$ and $\rho$ (cf. Ref.~\cite{jungel2015boundedness}, \cite{burger2016lane}).\\
The previous propositions and definitions allow us to prove global existence of weak solutions.
\begin{definition}\label{def1}
A function  $(r,b):(0,T) \times \Omega \to \overline{\mathcal{M}}$ is called a weak solution to system \eqref{system_entropy} if it satisfies the formulation 
\begin{align*}
\int_0^T \int_\Omega \begin{pmatrix}
\pa_t r \\ \pa_t b
\end{pmatrix}&\cdot\begin{pmatrix}
\Phi_1\\\Phi_2
\end{pmatrix}+\varepsilon 
\begin{pmatrix}
\pa_x r(1-\rho)+r \pa_x \rho\\ \pa_y b (1-\rho)+b\pa_y \rho
\end{pmatrix}\cdot\begin{pmatrix}
\pa_x \Phi_1\\ \pa_y \Phi_2
\end{pmatrix}\,dx\,dy\,dt\nonumber\\
&+2\varepsilon\int_0^T \int_{\Omega}\gamma_0\begin{pmatrix}
\pa_y r(1-\rho)+r\pa_y \rho\\ \pa_x b(1-\rho)+b\pa_x \rho
\end{pmatrix}\cdot\begin{pmatrix}
\pa_y \Phi_1\\ \pa_x \Phi_2
\end{pmatrix}\,dx\,dy\,dt\\
&+2\varepsilon\int_0^T \int_{\Omega}\gamma\begin{pmatrix}
\pa_y (r b)(1-\rho)+r b\pa_y \rho\\ \pa_x (r b)(1-\rho)+r b\pa_x \rho
\end{pmatrix}\cdot\begin{pmatrix}
\pa_y \Phi_1\\ \pa_x \Phi_2
\end{pmatrix}\,dx\,dy\,dt\nonumber\\
&+\int_0^T\int_{\Omega}\begin{pmatrix}
(1-\rho)r\nabla V_r\\(1-\rho)b\nabla V_b
\end{pmatrix}\cdot\begin{pmatrix}
\nabla \Phi_1\\ \nabla \Phi_2
\end{pmatrix}\,dx\,dy\,dt=0,\nonumber
\end{align*}
for all $\Phi_1,\Phi_2\in L^2(0,T;H^1(\Omega))$. 
\end{definition}

\begin{theorem}{(Global existence)}\label{theorem1}
Let $T>0$ and $(r_0,b_0):\Omega \to \mathcal{M}$, where $\mathcal{M}$ is defined by \eqref{equ:set}, be measurable initial conditions such that $h_E(r_0,b_0)\in L^1(\Omega)$. If $\frac{1}{8}<\gamma_0<1$, there exists a weak solution $(r,b):\Omega\times (0,T)\to \overline{\mathcal{M}}$ in the sense of Definition \ref{def1} to system \eqref{system_entropy} with periodic boundary conditions satisfying 
\begin{align*}
&\pa_t r,\, \pa_t b \in L^2(0,T;H^1(\Omega)'),\\
&\rho,\, \sqrt{1-\rho}\,\in L^2(0,T;H^1(\Omega )),\\
&(1-\rho)\nabla\sqrt{r},\,(1-\rho)\nabla\sqrt{b} \,\in L^2(0,T;L^2(\Omega)).
\end{align*}
Moreover, the weak solution satisfies the following entropy dissipation inequality:
\begin{align}
\label{theorem1_2}
\begin{aligned}
\frac{\mathrm{d}E}{\mathrm{d}t} +\mathcal{D}_1\leq C,
\end{aligned}
\end{align}
where
\begin{equation*}
\mathcal{D}_1=\varepsilon^2 C_0\int_{\Omega} (1-\rho)^2|\nabla\sqrt{r}|^2 +(1-\rho)^2|\nabla\sqrt{b}|^2+ |\nabla\sqrt{1-\rho}|^2+|\nabla \rho|^2\,dx\,dy
\end{equation*}
and $C_0$ and $C$ are the constants from Lemma \ref{lemma_entropy}.
\end{theorem}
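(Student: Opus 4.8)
The plan is to follow the boundedness-by-entropy method of Jüngel, as already carried out for the bidirectional model in Ref.~\cite{burger2016lane}: the entropy-variable formulation \eqref{system_entropy} together with the quadratic-form estimate of Lemma~\ref{lemma_entropy} (valid precisely for $1/8<\gamma_0<1$) provides a formal gradient-flow-type structure that compensates for the absence of a maximum principle. Since the entropy density $h_E$ is strictly convex on $\mathcal{M}$ with invertible gradient $(r,b)\mapsto(u,v)=\nabla h_E(r,b)$ whose inverse maps all of $\mathbb{R}^2$ into $\mathcal{M}$, using $(u,v)$ as the primary unknowns automatically enforces $0<r$, $0<b$, $r+b<1$; the closure $\overline{\mathcal{M}}$ appears only in the weak limit.

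First I would introduce a two-parameter approximation: an implicit Euler discretization in time with step $\tau=T/K$, and an elliptic regularization $\delta\,\mathcal{R}(u,v)$ with $\mathcal{R}$ coercive on $H^m(\Omega)$ for some $m>1$, so that the embedding $H^m(\Omega)\hookrightarrow L^\infty(\Omega)$ makes $(r,b)=(\nabla h_E)^{-1}(u,v)$ well defined and bounded. Each time step then reduces to a nonlinear elliptic problem for $(u^k,v^k)$, solvable by the Leray--Schauder fixed point theorem, the necessary a priori bound coming from Lemma~\ref{lemma_entropy}. The discrete analogue $E(r^k,b^k)+\tau\mathcal{D}_0^{(k)}\le E(r^{k-1},b^{k-1})+\tau C$ of \eqref{entropyinequality}, summed over $k$, yields bounds uniform in $\tau$ and $\delta$ for $\rho$ and $\sqrt{1-\rho}$ in $L^2(0,T;H^1(\Omega))$, for $(1-\rho)\nabla\sqrt r$ and $(1-\rho)\nabla\sqrt b$ in $L^2(0,T;L^2(\Omega))$, and a vanishing contribution $\delta^{1/2}\|(u,v)\|_{L^2(0,T;H^m(\Omega))}\le C$.

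Next I would extract compactness. Rewriting every flux in Definition~\ref{def1} through the controlled quantities — e.g. $(1-\rho)\partial_x r+r\partial_x\rho=2\sqrt r\,(1-\rho)\partial_x\sqrt r+r\partial_x\rho$, and analogously for the $\gamma_0$- and $rb$-terms, using $0\le r,b,\rho\le1$ — shows the fluxes are bounded in $L^2(0,T;L^2(\Omega))$; testing the scheme gives $\partial_t r,\partial_t b$ bounded in $L^2(0,T;H^1(\Omega)')$. Aubin--Lions applied to $\rho$ then yields $\rho_n\to\rho$ strongly in $L^2(0,T;L^2(\Omega))$ and a.e. The main obstacle is that the diffusion degenerates as $\rho\to1$, so only $(1-\rho)\nabla\sqrt r$ is controlled; as in Ref.~\cite{burger2016lane} one overcomes this by localizing on $\{\rho<1\}$, where $1-\rho$ is bounded away from $0$ and hence $\sqrt{r_n},\sqrt{b_n}$ gain local $H^1$-compactness, while on $\{\rho=1\}$ all fluxes vanish (there $\nabla\rho=0$ a.e. and each flux carries a factor $1-\rho$). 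Combining the local strong convergence of $r_n,b_n$ with the a.e. convergence of $\rho_n$ then identifies the weak $L^2$ limits of all nonlinear flux terms, in particular $(1-\rho_n)\nabla\sqrt{r_n}\rightharpoonup(1-\rho)\nabla\sqrt r$.

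Finally, passing $\delta\to0$ and $\tau\to0$ in the weak formulation of the scheme gives the identity of Definition~\ref{def1}: the $\delta$-term drops out, the time-derivative terms converge by weak convergence in $L^2(0,T;H^1(\Omega)')$, and the flux terms converge by the previous step. For the entropy inequality \eqref{theorem1_2} one sums the discrete inequalities and passes to the $\liminf$, invoking weak lower semicontinuity of convex functionals; since in the limit only the weak $L^2$ convergence of $(1-\rho_n)\nabla\sqrt{r_n}$ is at hand, one recovers control of $\int_\Omega|(1-\rho)\nabla\sqrt r|^2\,dx\,dy=\int_\Omega(1-\rho)^2|\nabla\sqrt r|^2\,dx\,dy$ rather than of $\int_\Omega(1-\rho)|\nabla\sqrt r|^2\,dx\,dy$, which is exactly why $\mathcal{D}_1$ carries the factor $(1-\rho)^2$ in place of the $(1-\rho)$ appearing in $\mathcal{D}_0$. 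The stated regularity of $(r,b)$ is inherited from the uniform bounds that survive these limits.
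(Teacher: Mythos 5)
Your proposal follows essentially the same route the paper itself takes: the boundedness-by-entropy method with a time-discrete, elliptically regularized approximation, a fixed-point argument at each time step, uniform a priori bounds from the discrete analogue of the entropy inequality of Lemma~\ref{lemma_entropy}, and a (generalized) Aubin--Lions compactness argument to pass to the limit, which is precisely the sketch the paper gives while deferring details to Ref.~\cite{burger2016lane}. Your version is in fact more explicit than the paper's (in particular on the identification of the degenerate nonlinear fluxes and on why $\mathcal{D}_1$ carries the factor $(1-\rho)^2$ rather than $(1-\rho)$), and I see no step that conflicts with the intended argument.
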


Since the proof follows the lines of Ref.~\cite{burger2016lane}, we omit the details and sketch its ideas only. In the first step one considers a time discrete regularized formulation of \eqref{system_entropy}, for which existence of weak solutions is guaranteed by Lax-Milgram. Then we use Schauder's fixed point theorem, cf. Ref. \cite{browder1965fixed}, to conclude the existence result for the corresponding nonlinear problem. Finally uniform a priori estimates in the discrete time step $\tau$ arising from the discrete version of the entropy inequality and the use of a generalized Aubin-Lions lemma (cf. Ref.~\cite{zamponi2015analysis}) allow to pass to the limit $\tau \to 0$.

\begin{remark}\label{remark3.1}
If $\gamma_1\neq \gamma_2$, the additional first order term can easily be controlled by Young's inequality as it was done for the other linear terms in the proof of Lemma \ref{lemma_entropy}. The resulting additional diffusion terms can only be controlled for $\gamma_1-\gamma_2$ sufficiently small by the entropy production term in \eqref{lf_entropy_ineq2}. More precisely using
\[2\varepsilon|\gamma_1-\gamma_2|(1-\rho)r|\pa_x b \pa_y u|\leq 4\varepsilon^2|\gamma_1-\gamma_2|(1-\rho)(\pa_x \sqrt{b})^2+|\gamma_1-\gamma_2|(1-\rho)r(\pa_y u)^2\]
and
\[2\varepsilon|\gamma_1-\gamma_2|(1-\rho)b|\pa_y r \pa_x v|\leq 4\varepsilon^2|\gamma_1-\gamma_2|(1-\rho)(\pa_y \sqrt{r})^2+|\gamma_1-\gamma_2|(1-\rho)b(\pa_x v)^2,\]
the estimate can be closed for $|\gamma_1-\gamma_2|<\frac{\min\left\{2\gamma_0-\frac{1}{4},\frac{1}{2}(1-\gamma_0)\right\}}{33}$.
\end{remark}

\subsection{Numerical simulations}\label{numerical_simulations_2D}
Next we illustrate the behavior of the model \eqref{system} in spatial dimension two. The following simulations have been carried out using the COMSOL Multiphysics Package with quadratic finite elements. We consider the domain $\Omega= [0,1]\times[0,1]$ with periodic boundary conditions. The spatial mesh consists of 3258 triangles, the maximum time step in the used backward differentiation formula (BDF) method is set to $0.1$.

\subsubsection{Example I: Periodic boundary conditions.}\label{ex:1}
In our first example we assume that individuals have a small preference to step to the right, that is $\gamma_1=0.15$ and $\gamma_2=0.1$.
We set $\gamma_0=0.2$, $\varepsilon=0.05$ and the initial values to 
\begin{align}\label{initialvalues}
r_0(x,y)&=r_\infty+0.02\cos(\pi x)\sin\left(\pi y\right),\nonumber\\
b_0(x,y)&=b_\infty+0.02\sin(\pi x)\cos\left(\pi y\right),
\end{align}
with $r_\infty=b_\infty=0.4$. Figure \ref{f:ex1} illustrates the initial values $r_0,b_0$ and the solution $r_T,b_T$ to system \eqref{system} at time $T=20$. We observe the formation of shifted stationary diagonal stripes for the respective pedestrian densities.  
\begin{figure}[h!]
\begin{center}
\subfigure[$r_0$]{\includegraphics[height=45mm, width=62mm]{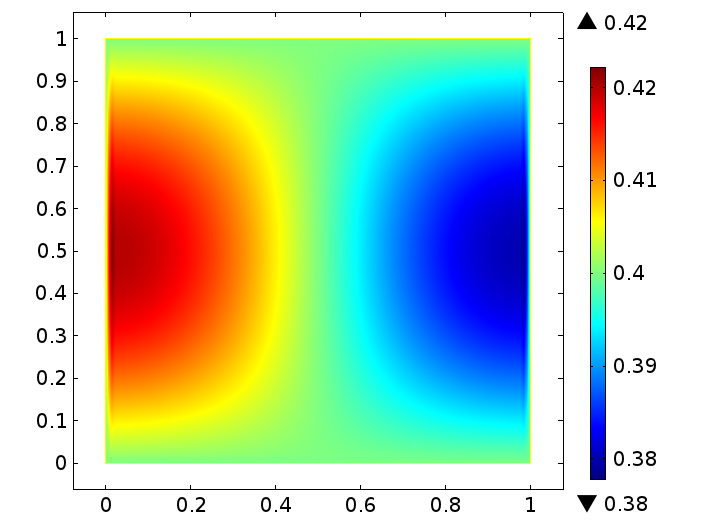}}
\subfigure[$b_0$]{\includegraphics[height=45mm, width=62mm]{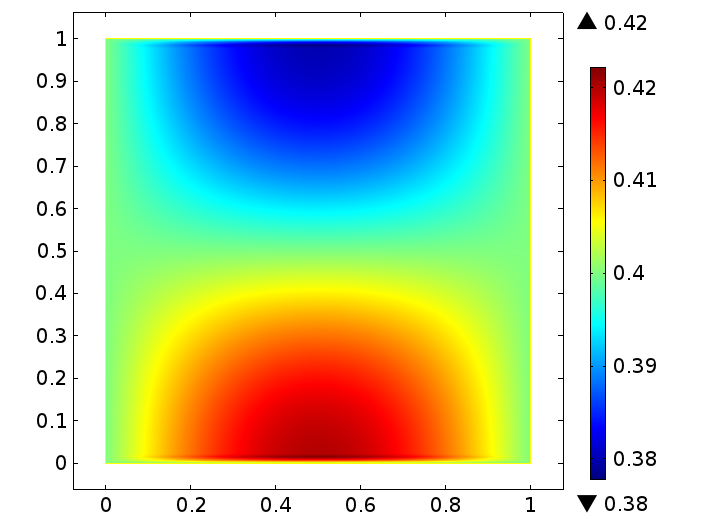}}
\subfigure[$r_T$ at $T=20$]{\includegraphics[height=45mm, width=62mm]{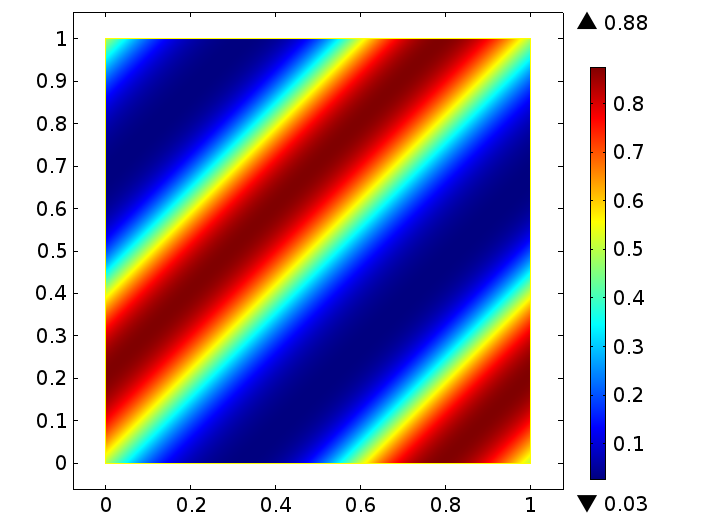}}
\subfigure[$b_T$ at $T=20$]{\includegraphics[height=45mm, width=62mm]{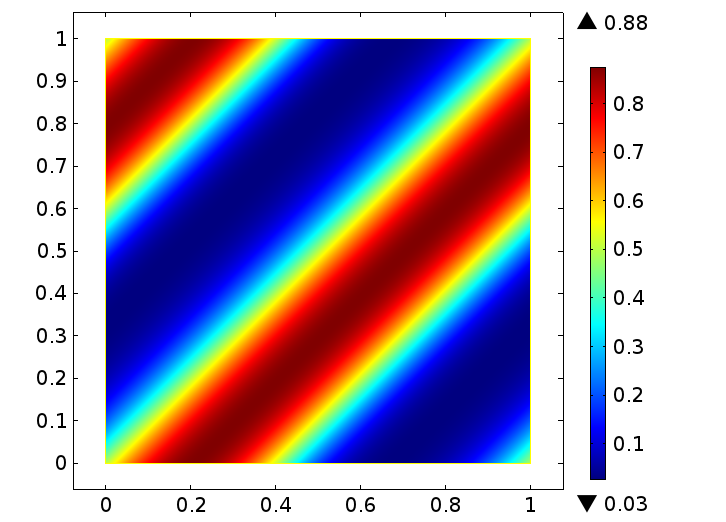}}
\caption{Example I: Formation of diagonal lanes in the red and blue particle density in the case of small perturbation of the equilibrium solutions $(r_\infty,b_\infty)$.}\label{f:ex1}
\end{center}
\end{figure}

\begin{remark}
Note that the direction of the diagonal stripes does not depend on $\gamma_1$ and $\gamma_2$. However, the total initial masses $M_r$ and $M_b$, where
\begin{equation*}
M_r:=\int_{\Omega} r_0(x,y)\,dx\,dy\quad \text{ and } \quad M_b:=\int_{\Omega} b_0(x,y)\,dx\,dy,
\end{equation*}
as well as the type of perturbation change the stationary profiles. If the total initial mass is small, perturbations smooth out quickly, the system returns to its initial equilibrium state. If the total mass is sufficiently large, as in Example \ref{ex:1}, we observe the formation of diagonal stripes. The simulations indicate that the set $\mathcal{M}$ is divided to a stable and unstable region. A rigorous proof is however left for future work. 
\end{remark}

%\subsubsection{Example II: First order system.}
%%crossing2.m
%%results in Example 1
%%T=10;
%%L=1;
%%Lx=1;
%%Ly=1;
%%
%%n=100;
%%m=3000;
%We consider the first order system \eqref{e:hypsys} with a small preference to step to one side, i.e. $\gamma_1=0.2$, $\gamma_2=0.1$, periodic boundary conditions and initial values \eqref{initialvalues} with $r_\infty=b_\infty=0.4$. Figure \ref{f:ex2} illustrates the solution $r_T$ and $b_T$ to system \eqref{e:hypsys} at time $T=10$ calculated using the Lax-Friedrich method in MATLAB.
%
%\begin{figure}[h!]
%\begin{center}
%\subfigure[$r_T$]{\includegraphics[height=40mm, width=37mm]{r_laxfriedrich}}
%\subfigure[$b_T$]{\includegraphics[height=40mm, width=37mm]{b_laxfriedrich}}
%\caption{Example I: Densities at time $T=10$.} \label{f:ex2}
%\end{center}
%\end{figure}
%
%\begin{remark}
%Note that we did not prove existence of the first order system. 
%\end{remark}

\subsubsection{Example II: Mixed boundary conditions.}
The case of more realistic boundary conditions shows that the choice of the parameters $\gamma_1$ and $\gamma_2$ is significant. Again we start with initial values \eqref{initialvalues}, where $r_\infty=b_\infty=0.1$, and set $\gamma_0=0.15$, $\varepsilon=0.0025$. We now use the following boundary conditions:  Dirichlet  at the corresponding entrances, a given outflux at the exits and no-flux boundary conditions on the rest of the domain. More precisely
\begin{align*}
r(0,y)=b(x,0)=0.1, \quad J_r\cdot &\begin{pmatrix}
1 \\ 0
\end{pmatrix}=0.8r, \quad J_b\cdot \begin{pmatrix}
0 \\ 1
\end{pmatrix}=0.8b \\
J_r\cdot \begin{pmatrix}
0 \\ \pm 1
\end{pmatrix}=J_b \cdot &\begin{pmatrix}
\pm 1 \\ 0
\end{pmatrix}=0.
\end{align*}
Figures \ref{f:ex2_1} and \ref{f:ex2_2} show the different behavior for $\gamma_1=0.2$,  $\gamma_2=0.1$ and  $\gamma_1=0.1$ and $\gamma_2=0.2$. While in the first case pedestrians can still move to their preferred walking direction, we observe a deadlock in the second case. This confirms the intuitive assumption that stepping aside into the opposite direction as the other group, i.e. $\gamma_1>\gamma_2$, prevents a collision in the next time step. On the other hand, if an individual steps aside in the walking direction of the other group, the initial 'conflict situation' remains unchanged (see Figure \ref{f:sidestepping} for a graphical illustration of such a situation for a red individual).

\begin{figure}[h!]
\begin{center}
\subfigure[$r_T$ at $T=100$]{\includegraphics[height=45mm, width=62mm]{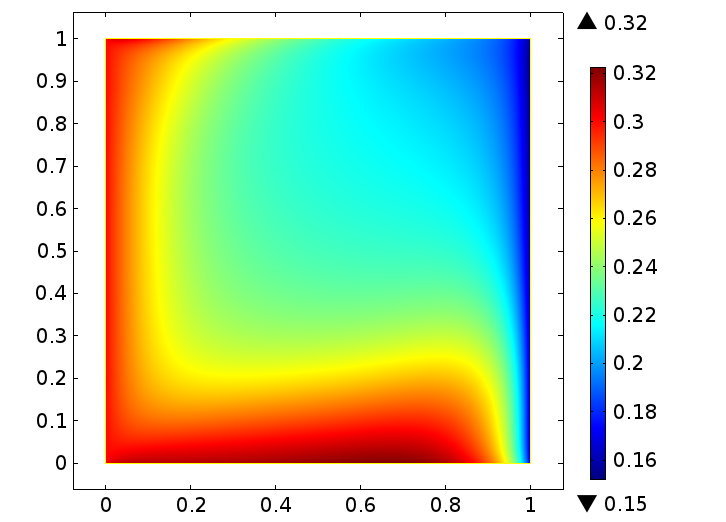}}
%\hspace*{0.5cm}
\subfigure[$b_T$ at $T=100$]{\includegraphics[height=45mm, width=62mm]{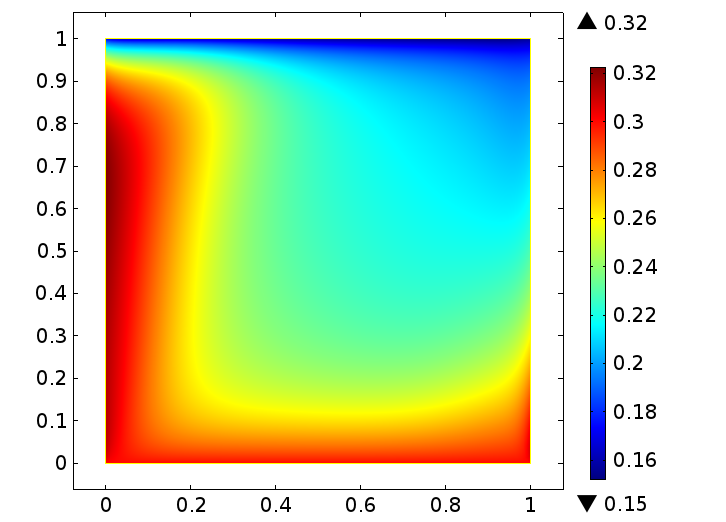}}
\caption{Example II: Particle density for $\gamma_1=0.2$ and $\gamma_2=0.1$.}\label{f:ex2_1}
\end{center}
\end{figure}

\begin{figure}[h!]
\begin{center}
\subfigure[$r_T$ at $T=100$]{\includegraphics[height=45mm, width=62mm]{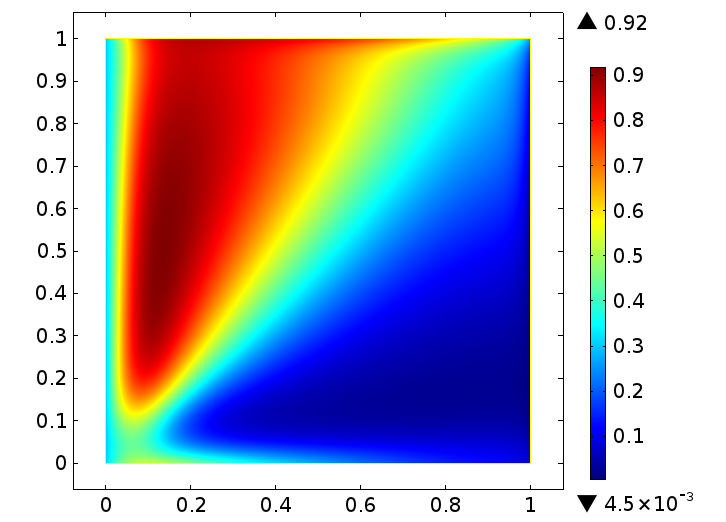}}
%\hspace*{0.5cm}
\subfigure[$b_T$ at $T=100$]{\includegraphics[height=45mm, width=62mm]{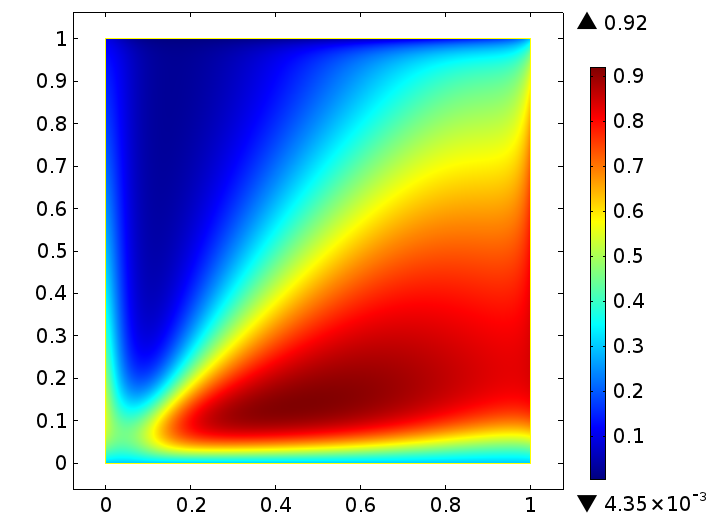}}
\caption{Example II: Particle density for $\gamma_1=0.1$ and $\gamma_2=0.2$.}\label{f:ex2_2}
\end{center}
\end{figure}

\begin{figure}[h!]
\begin{center}
\subfigure[Conflict situation as the red individual wants to go to the right.]{\includegraphics[height=33mm]{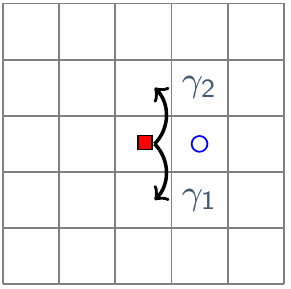}}
\hspace*{0.5cm}
\subfigure[Stepping aside in the same direction as the other group does not resolve the conflict.]{\includegraphics[height=33mm]{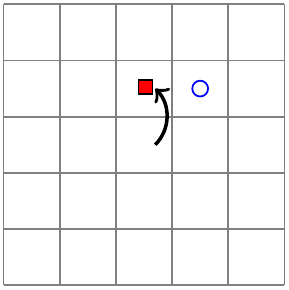}}
\hspace*{0.5cm}
\subfigure[Side stepping in the other direction resolves the situation.]{\includegraphics[height=33mm]{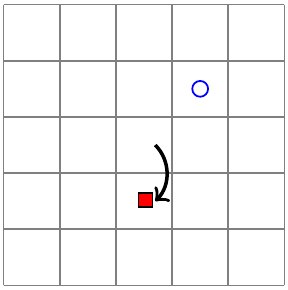}}
\caption{Illustration of a conflict situation for a red individual.}\label{f:sidestepping}
\end{center}
\end{figure}

\subsection{Particle simulations}
In this section we want to compare the macroscopic results presented in Section \ref{numerical_simulations_2D} to the stochastic individual based model introduced in Section \ref{individual_model}. In particular, we perform a particle simulation using Mathematica considering a domain $\Omega$ partitioned into a grid of size $N\times N$ with $P$ particles in total. We define the total density $\rho_\Omega$ by
\[\rho_\Omega:=\frac{P}{N^2}.\]
In each time step we update the position of all particles in a random order.

\subsubsection{Example I}\label{ex_ps:1}
In this example, we set $\alpha=0.6$, $\gamma_0=0.15$, $\gamma_1=0.2$ and $\gamma_2=0.1$, i.e. we consider a small preference to step to one side. Note that this choice of parameters satisfies condition \eqref{CFL}.

We perform two particle simulations for different total densities, namely $\rho_\Omega=0.2$ and $\rho_\Omega=0.5$. This corresponds to a initial random distribution of $2000$ and $5000$ particles on a $100\times 100$ grid. The particle distribution after $500$ time steps in either case is illustrated in Figure \ref{f:ex3}. Whereas for the smaller density the distribution is well mixed, we observe a clear segregation in the case $\rho_\Omega =0.5$. Note that the segregation pattern has a similar structure as in Figure \ref{f:ex1}, the diagonal stripes have the same orientation. Although the particle simulations of the stochastic individual based model cannot directly be compared to the simulations of the macroscopic model, the results let assume that they have a similar behavior. 

\begin{figure}[h!]
\begin{center}
\subfigure[Particle simulation for $\rho_\Omega=0.2$]{\includegraphics[height=50mm, width=50mm]{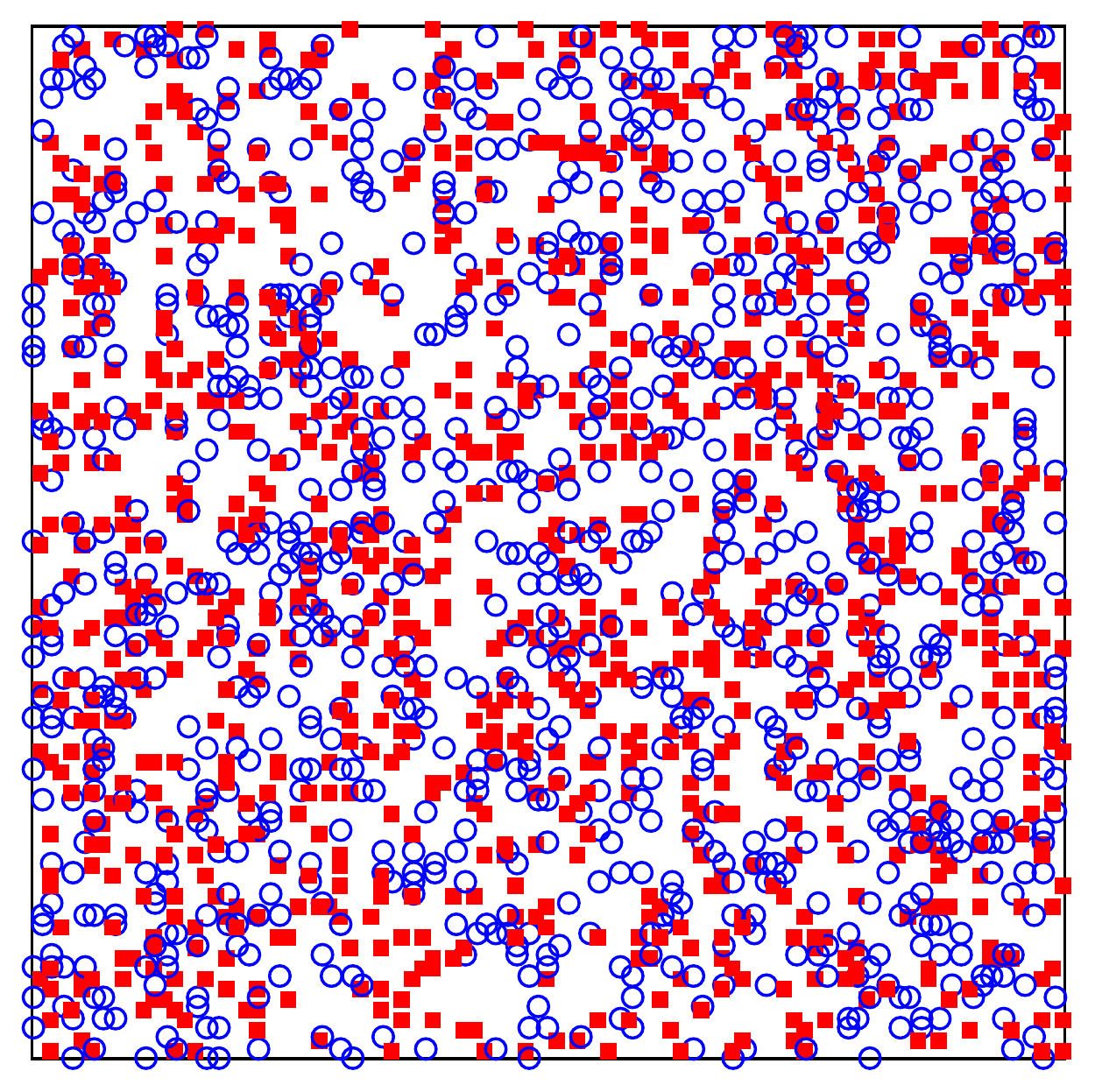}}
\hspace*{0.5cm}
\subfigure[Particle simulation for $\rho_\Omega=0.5$]{\includegraphics[height=50mm, width=50mm]{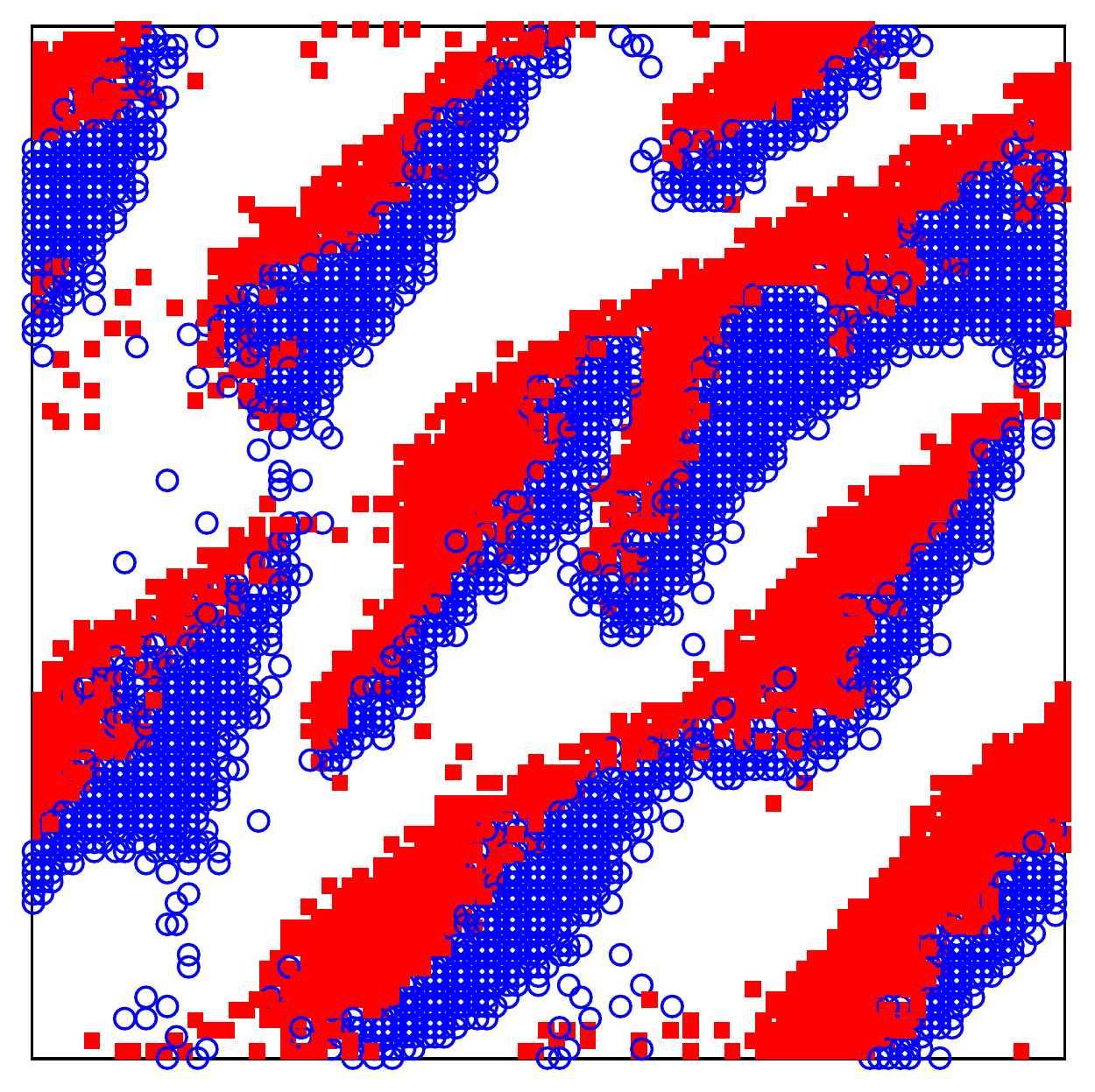}}
\caption{Particle simulation after 500 steps.} \label{f:ex3}
\end{center}
\end{figure}

\subsubsection{Example II}\label{ex_ps:2}
Another interesting feature arises if we choose $\gamma_0=0$ and $\alpha=1$. 
Setting $\gamma_1=0.2$ and $\gamma_2=0.1$ (satisfying condition \eqref{CFL}) and starting with an initial random density of $\rho_\Omega=0.2$, we observe the formation of traveling diagonal wave patterns, see Figure \ref{f:ex4}. These patterns are not stationary and the orientation does not depend on the choice of $\gamma_1$ and $\gamma_2$. We would like to mention that diagonal stripes have been observed in a similar model for pedestrian dynamics (in which pedestrians were not able to step aside), see Ref.~\cite{cividini2013diagonal} and Ref.~\cite{cividini2013crossing}. Note that this particular choice of parameters corresponds to the fact that particles always maintain their walking direction if possible. Only in conflict situations (as illustrated in Figure \ref{f:sidestepping}), the particles try to step aside.

\begin{figure}[h!]
\begin{center}
\includegraphics[height=50mm, width=50mm]{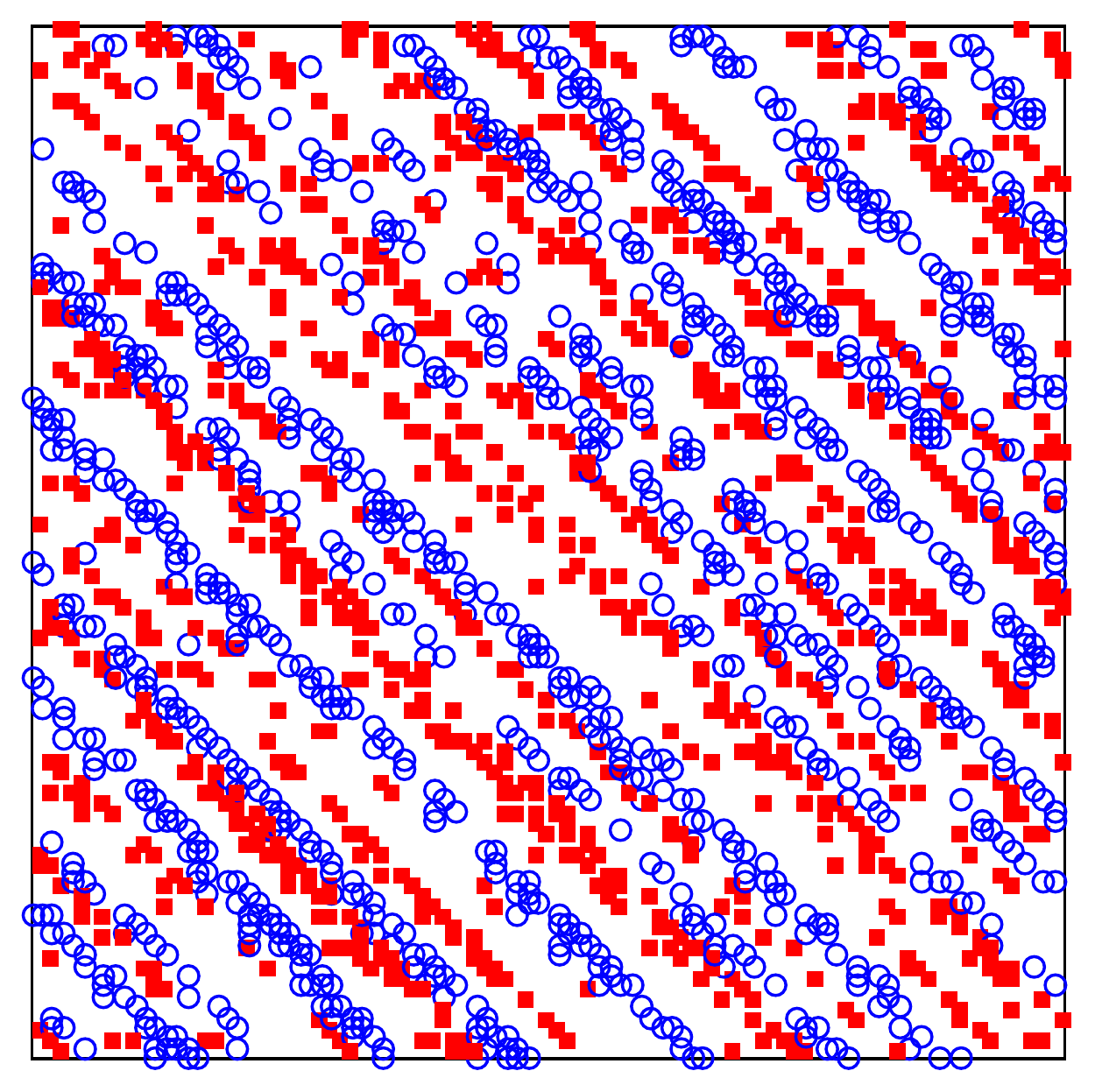}
\caption{Particle simulation after 500 steps.}\label{f:ex4}
\end{center}
\end{figure}

\section{The reduced 1D model}\label{1dmodel}
The preceding simulations show that system \eqref{system} reveals stable as well as unstable regions for different values of $(r,b)\in \overline{\mathcal{M}}$. In order to gain further insights, we consider the first order system \eqref{e:hypsys}, which can be written in the form
\begin{align}\label{e:hypsys2}
\begin{aligned}
 \begin{pmatrix}
\pa_t r\\\pa_t b
\end{pmatrix} &=A\begin{pmatrix}
\pa_x r\\ \pa_x b 
\end{pmatrix}+B \begin{pmatrix}
\pa_y r\\ \pa_y b
\end{pmatrix}, 
\end{aligned}
\end{align}
where 
\[A=\begin{pmatrix}
2r+b-1 & r\vspace*{2mm} \\(\gamma_1-\gamma_2)b(1-2r-b)
 \quad & (\gamma_1-\gamma_2)r(1-r-2b)
\end{pmatrix},\]
and
\[B=\begin{pmatrix}
(\gamma_1-\gamma_2)b(1-2r-b) \quad & (\gamma_1-\gamma_2)r(1-r-2b)\vspace*{2mm}
\\b & r+2b-1 
\end{pmatrix}.\]
The matrices $A$ and $B$ are diagonalizable for $\gamma_1\neq \gamma_2$ and $\rho \leq \frac{1}{2}$. For $\rho>\frac{1}{2}$, both eigenvalues might vanish. Moreover, the eigenvalues of the linear combinations of $A$ and $B$ are not all real in general. Hence, system \eqref{e:hypsys2} is not hyperbolic. Additionally, the system is not genuinely nonlinear. Note that a system of the form
\begin{align*}
 \begin{pmatrix}
\pa_t r\\\pa_t b
\end{pmatrix} &=M(r,b)\begin{pmatrix}
\pa_x r\\ \pa_x b 
\end{pmatrix}
\end{align*}
is called genuinely nonlinear, if $r_k\cdot \nabla_{r,b} \lambda_k\neq 0$ for $k=1,2$, where $r_k$ and $\lambda_k$ denote the right eigenvector and eigenvalue respectively.\\ 
Since the full $2D$ system is very complex, we start the analysis of the simplified $1D$ reduction in the following.\\
In particular, we now focus on the two types of individuals walking in opposite directions described by densities on a line. Note that this situation corresponds to the original problem, if we write the system using coordinates $\left(\frac{x+y}{2},\frac{x-y}{2}\right)$ and study the dynamics of the diagonal patterns. Reduced 1D models for bidirectional pedestrian flows have been studied in Ref.~\cite{AppertRolland2011}, Ref.~\cite{chertock2014pedestrian} and Ref.~\cite{fukui1999self}. The behavior of different models for multi-lane pedestrian flows has been analyzed in Ref. \cite{AppertRolland2011}, which are closly related to the traffic flow models such as the Aw-Rascle and Lighthill-Whitham-Richards (LWR) model. They observe a similar behavior in the proposed first order models, namely the lack of hyperbolicity of the system. Note that this has been reported for classic traffic flow models for $n$ populations in Ref. \cite{benzoni2003}. In Ref. \cite{chertock2014pedestrian} a 1D pedestrian model with slowdown interactions for counterflows in narrow streets is derived. This system is also not hyperbolic and exhibits similar instabilities as we observe in our numerical simulations. The linear stability analysis of both systems gave similar results as we will show in the following.\\
 In our case, the transition rates reduce to
\begin{align*}
\T_r^{i\rightarrow i+1}(r,b)&=\alpha(1-\rho_{i+1}),\\
\T_b^{i\rightarrow i-1}(r,b)&=\alpha(1-\rho_{i-1}),
\end{align*}
which lead to the system 
\begin{align}\label{hypsys1D}
\begin{aligned}
\begin{pmatrix}
\pa_t r\\\pa_t b
\end{pmatrix} =\begin{pmatrix}
-\pa_x ((1-\r)r) \\ \pa_x ((1-\r)b)
\end{pmatrix}&=C(r,b)\begin{pmatrix}
\pa_x r\\ \pa_x b
\end{pmatrix},
\end{aligned}
\end{align}
where 
\[C:=C(r,b)=\begin{pmatrix}
2r+b-1 & r \\ -b & -2b-r+1\\
\end{pmatrix}.\]
The characteristic polynomial of $C$ is 
\[p_C(\lambda)=\lambda^2+\lambda (b-r)-(1-2\rho)(1-\rho)\] and the corresponding eigenvalues are 
\[\lambda_{1,2}=\frac{r-b}{2}\pm \sqrt{\frac{(r-b)^2}{4}+(1-\rho)(1-2\rho)}.\] 
Since the eigenvalues can take complex values system \eqref{hypsys1D} is also not hyperbolic. But we are able to calculate its  hyperbolic regions in $\overline{\mathcal{M}}$ explicitly. Figure \ref{f:hyperbolic_region1D} shows the ellipsoidal region of $r$ and $b$ inside which the $1D$ system is not hyperbolic. 

\begin{figure}[h!]
\begin{center}
\includegraphics[height=50mm, width=50mm]{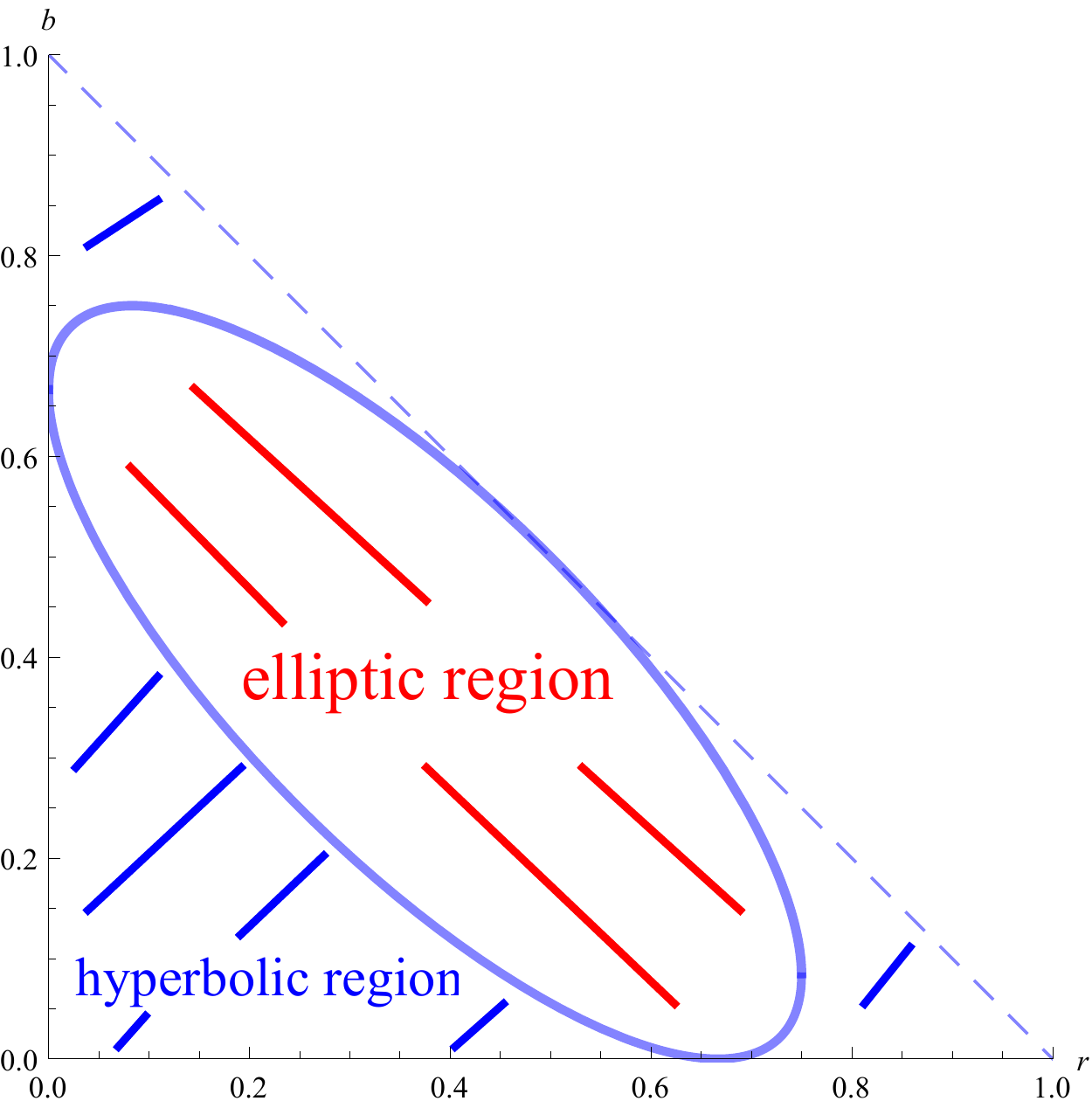}
\caption{Elliptic region of system \eqref{hypsys1D}.} \label{f:hyperbolic_region1D}
\end{center}
\end{figure}

\begin{remark}
In two dimensions, the derivation of the exact hyperbolic region is a more challenging task. Nevertheless, we can check with the help of the corresponding eigenvalues that system \eqref{e:hypsys2} is hyperbolic if $\rho<\frac{1}{2}$.
\end{remark}

\subsection{Linear stability}\label{linearstability}
In this subsection, we take a closer look at the linear stability of equilibrium solutions of system \eqref{hypsys1D} as well as of a regularized, parabolic version of system \eqref{hypsys1D} on $\Omega \times (0,T)$ for $\Omega \subseteq \mathbb{R}$ bounded and we assume periodic boundary conditions. This work is closely related to the linear stability analysis presented in Ref. \cite{AppertRolland2011} and Ref. \cite{chertock2014pedestrian} and allows to determine which equilibrium solutions are stable with respect to small perturbations. Let $(r_\infty,b_\infty)$ be an equilibrium solution, the linearized system around $(r_\infty,b_\infty)$ is
\begin{align}\label{linearized_hyp_1D}
\begin{aligned}
\begin{pmatrix}
\pa_t r\\\pa_t b
\end{pmatrix} =\begin{pmatrix}
-(1-\r_\infty)\pa_x r+r_\infty\pa_x \r \\ (1-\r_\infty)\pa_x b-b_\infty \pa_x \r
\end{pmatrix}.
\end{aligned}
\end{align} 
We look for solutions which are Fourier modes of the form $r=\overline{r}e^{ik\pi x}e^{\lambda t}$ and $b=\overline{b}e^{ik\pi x}e^{\lambda t}$, where $\overline{r},\overline{b}$ are the amplitudes of the mode, $k$ denotes the wave number and $\lambda$ the frequency. Inserting this Fourier ansatz into \eqref{linearized_hyp_1D} leads to the homogeneous linear system
\begin{small}
\begin{align}\label{linearized_hyp_1D_2}
\begin{aligned}
 (C_F-\lambda I)\begin{pmatrix}
r\\ b
\end{pmatrix}=0,
\end{aligned}
\end{align} 
\end{small}
where \[C_F=\begin{pmatrix}
-ik\pi (1-2r_\infty-b_\infty) & ik\pi r_\infty \\ 
-ik\pi b_\infty & ik\pi (1-r_\infty-2b_\infty)
\end{pmatrix}.\]
The system has non-trivial solutions if and only if the determinant of the matrix $C_F-\lambda I$ vanishes resulting in a relation between the frequency $\lambda$ and the wave number $k$. If the real parts of the eigenvalues $\lambda$ are negative for all wave numbers $k\in \mathbb{R}$, system \eqref{linearized_hyp_1D_2} is called asymptotically stable. If one eigenvalue $\lambda$ gets positive for some $k\in \mathbb{R}$, we have instabilities.\\
Evidently, we expect instabilities in the non-hyperbolic region. In the hyperbolic region, the real part of the eigenvalues is zero.  To analyze the linear stability behavior inside the hyperbolic region, we would have to consider also higher order terms. Therefore we analyze and simulate the respective parabolic $1D$ model derived in the same way as the $2D$ model in Section \ref{derivation_macro}. If we add the natural regularization coming from the Taylor expansion, we have
\begin{align}\label{parabolic_1D}
\begin{aligned}
\begin{pmatrix}
\pa_t r\\\pa_t b
\end{pmatrix} =\begin{pmatrix}
-\pa_x ((1-\r)r)+\epsilon (\pa_x((1-b)\pa_x r+r\pa_x b)) \\ \pa_x ((1-\r)b)+\epsilon (\pa_x((1-r)\pa_x b+b\pa_x r))
\end{pmatrix}.
\end{aligned}
\end{align} 
First of all we study the zero-flux stationary solutions of system \eqref{parabolic_1D}, which satisfy
\begin{align}\label{stationary_1D}
\begin{aligned}
\begin{pmatrix}
0\\0
\end{pmatrix} =\begin{pmatrix}
-(1-\r)r+\epsilon ((1-b)\pa_x r+r\pa_x b) \\ (1-\r)b+\epsilon ((1-r)\pa_x b+b\pa_x r)
\end{pmatrix}.
\end{aligned}
\end{align}

\begin{proposition}
Let $(r_S,b_S)$ denote a solution to system \eqref{stationary_1D}. If $0< r_S,b_S,\rho_S<1$, then $\pa_x r_S$ and $\pa_x b_S$ must have different signs at every point $x$.
\end{proposition}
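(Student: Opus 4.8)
The plan is to work directly from the two stationary equations in \eqref{stationary_1D} and eliminate the common factor $(1-\rho)$ by solving for the derivatives. Adding the two equations gives
\[
\epsilon\left[(1-b)\pa_x r + r\pa_x b + (1-r)\pa_x b + b\pa_x r\right] = (1-\rho)(r_S - b_S),
\]
so that $\epsilon\left[\pa_x r_S + \pa_x b_S\right] = (1-\rho_S)(r_S-b_S)$, i.e. the sum of the derivatives is controlled. More usefully, I would treat \eqref{stationary_1D} as a linear $2\times 2$ system for the unknowns $\pa_x r_S$ and $\pa_x b_S$ with coefficient matrix
\[
\begin{pmatrix} 1-b_S & r_S \\ b_S & 1-r_S \end{pmatrix},
\]
whose determinant is $(1-b_S)(1-r_S) - r_Sb_S = 1 - r_S - b_S = 1-\rho_S$, which is strictly positive under the hypothesis $0<\rho_S<1$. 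Cramer's rule then yields explicit formulas: $\epsilon(1-\rho_S)\,\pa_x r_S = (1-\rho_S)\big[(1-r_S)r_S + r_S b_S\big] = (1-\rho_S)r_S(1-\rho_S+b_S)$ — wait, I should be careful with the signs coming from the right-hand side $\big((1-\rho_S)r_S,\,-(1-\rho_S)b_S\big)$. Carrying this through, one finds $\pa_x r_S$ and $\pa_x b_S$ are each $(1-\rho_S)/\epsilon$ times an explicit rational expression in $r_S,b_S$.

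The key step is then to inspect the signs of those two explicit expressions and check they are opposite. The first equation in \eqref{stationary_1D} gives $(1-b_S)\pa_x r_S + r_S\pa_x b_S = (1-\rho_S)r_S > 0$, while the second gives $b_S\pa_x r_S + (1-r_S)\pa_x b_S = -(1-\rho_S)b_S < 0$. Suppose for contradiction that $\pa_x r_S$ and $\pa_x b_S$ have the same sign (or one of them vanishes) at a point $x$. If both are $\geq 0$, the second equation forces $b_S\pa_x r_S + (1-r_S)\pa_x b_S \geq 0$ since all coefficients $b_S,\,1-r_S$ are positive, contradicting the negativity of the right-hand side. Symmetrically, if both are $\leq 0$, the first equation gives $(1-b_S)\pa_x r_S + r_S\pa_x b_S \leq 0$, contradicting positivity of its right-hand side. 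This also rules out the cases where one derivative is zero and the other nonnegative or nonpositive, so the two derivatives must be strictly nonzero with opposite signs.

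I expect no serious obstacle here; the proof is essentially a sign-chasing argument once one recognizes \eqref{stationary_1D} as a linear system for the derivatives with positive-definite (indeed diagonally structured) coefficient matrix and right-hand side of fixed sign pattern $(+,-)$. The only point requiring a little care is to phrase the contradiction so that it simultaneously excludes the degenerate cases ($\pa_x r_S = 0$ or $\pa_x b_S = 0$) — this is handled by using non-strict inequalities ($\geq 0$, $\leq 0$) in the two auxiliary identities above and observing that the right-hand sides are \emph{strictly} signed, so equality cannot occur. One should also note explicitly that the hypothesis $0<r_S,b_S<1$ is what makes all the coefficients $1-b_S$, $r_S$, $b_S$, $1-r_S$ strictly positive, which is exactly what the argument needs.
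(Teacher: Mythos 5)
Your proof is correct and follows essentially the same route as the paper: assume the derivatives have the same sign, observe that the two left-hand sides of \eqref{stationary_1D} would then share a sign while the right-hand sides are strictly $(+,-)$, and conclude by contradiction. Your version is in fact slightly more careful than the paper's in explicitly excluding the degenerate cases where one derivative vanishes; the Cramer's-rule detour is unnecessary (and its sign bookkeeping is left unfinished), but the sign-chasing argument that follows stands on its own.
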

\begin{proof}
Let us assume that $\pa_x r_S$ and $\pa_x b_S$ have the same sign. Then the terms $\epsilon((1-b_S)\pa_x r_S+r_S\pa_x b_S)$ and $\epsilon ((1-r_S)\pa_x b_S+b_S\pa_x r_S)$ would also have the same sign. Since we assume that $(1-\rho_S)r_S$ and $(1-\rho_S)b_S$ are positive, equation \eqref{stationary_1D} can not hold, which leads to a contradiction.
\end{proof}

Linearizing system \eqref{parabolic_1D} around $(r_\infty,b_\infty)$ gives
\begin{align}\label{linearized_parabolic_1D}
\begin{aligned}
\begin{pmatrix}
\pa_t r\\\pa_t b
\end{pmatrix} =\begin{pmatrix}
-(1-\r_\infty)\pa_x r+r_\infty\pa_x \r+\epsilon ((1-b_\infty)\pa_{xx} r+r_\infty\pa_{xx} b) \\ (1-\r_\infty)\pa_x b-b_\infty \pa_x \r+\epsilon ((1-r_\infty)\pa_{xx} b+b_\infty\pa_{xx} r)
\end{pmatrix}.
\end{aligned}
\end{align} 

Inserting again the Fourier ansatz from above into \eqref{linearized_parabolic_1D} leads to the homogeneous linear system

\begin{small}
\begin{align}\label{linearized_parabolic_1D_2}
\begin{aligned}
 (D_F-\lambda I)\begin{pmatrix}
r\\ b
\end{pmatrix}=0,
\end{aligned}
\end{align} 
\end{small}
where \[D_F=\begin{pmatrix}
-ik\pi (1-2r_\infty-b_\infty)-\epsilon k^2\pi^2(1-b_\infty) & -\epsilon k^2\pi^2 r_\infty+ik\pi r_\infty \\ 
-\epsilon k^2\pi^2 b_\infty-ik\pi b_\infty & ik\pi (1-r_\infty-2b_\infty)-\epsilon k^2\pi^2(1-r_\infty) 
\end{pmatrix}.\]
The characteristic polynomial of $D_F$ is
\begin{align*}
p_{D_F}(\lambda)=& \lambda^2-\lambda (ik\pi(r_\infty-b_\infty)-\epsilon k^2\pi^2(2-\rho_\infty))+k^2\pi^2((1-2\rho_\infty)(1-\rho_\infty))\\
&-2i\epsilon k^3\pi^3(1-\rho_\infty)(r_\infty-b_\infty)+\epsilon^2k^4\pi^4(1-\rho_\infty).
\end{align*}
Again the real parts of the eigenvalues $\lambda$ determine the linear stability of system \eqref{linearized_parabolic_1D_2}. The following result has been calculated using Mathematica.\\

\begin{proposition}\label{theorem_linstab}
Let $\epsilon>0$ and let $(r_\infty,b_\infty)$ be such that $0 \leq r_\infty,b_\infty,\rho_\infty \leq 1$. Then system \eqref{linearized_parabolic_1D_2} is linearly stable for $(r_\infty,b_\infty)\notin \mathcal{D}$, where
$\mathcal{D}$ is the area inside the two curves $\gamma_{1,2}: [0,1] \mapsto (r,\min(\frac{-6 + 9 r - 4 r^2}{-9 + 8 r} \pm 4\sqrt{\frac{2 r - 3 r^2 + r^4}{(-9 + 8 r)^2}},1-r)$, see Figure \ref{f:linearstability_region1D}.  \\
Inside the curves $\gamma_{1,2}$, the system is unstable. The diffusion stabilizes the modes corresponding to large wave numbers, i.e. instabilities arise only for modes with
\[k<\frac{\sqrt{\frac{-4 + \r (12 - 8 r^2 + \r (-9 + 8 r))}{ (-2 + \r)^2}}}{\epsilon \pi}.\]
\end{proposition}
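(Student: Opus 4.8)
The plan is to analyze the two eigenvalues of $D_F$ by studying the real part of the roots of the characteristic polynomial $p_{D_F}(\lambda)$, which is a quadratic with complex coefficients. I would write $p_{D_F}(\lambda)=\lambda^2+p\lambda+q$ with $p=\epsilon k^2\pi^2(2-\rho_\infty)-ik\pi(r_\infty-b_\infty)$ and $q=k^2\pi^2\big[(1-2\rho_\infty)(1-\rho_\infty)+\epsilon^2k^2\pi^2(1-\rho_\infty)\big]-2i\epsilon k^3\pi^3(1-\rho_\infty)(r_\infty-b_\infty)$. For a monic complex quadratic $\lambda^2+p\lambda+q$, both roots have negative real part if and only if $\operatorname{Re}(p)>0$ and $\operatorname{Re}(p)\operatorname{Im}(p)\operatorname{Im}(q)+\operatorname{Re}(p)^2\operatorname{Re}(q)-\operatorname{Im}(q)^2>0$ (the complex analogue of the Routh--Hurwitz criterion, obtainable by writing $\lambda=\mu+i\nu$ and eliminating $\nu$, or by the bilinear transform $\lambda\mapsto(1+s)/(1-s)$ reducing to the real Hurwitz test). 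The first condition $\operatorname{Re}(p)=\epsilon k^2\pi^2(2-\rho_\infty)>0$ is automatic for $k\neq0$, $\epsilon>0$, $\rho_\infty\le1$, so stability is governed entirely by the sign of the second expression.

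Next I would substitute the explicit values of $\operatorname{Re}(p)$, $\operatorname{Im}(p)$, $\operatorname{Re}(q)$, $\operatorname{Im}(q)$ into that inequality. Writing $\delta:=r_\infty-b_\infty$ and $m:=k^2\pi^2$, a direct (but lengthy) computation collapses the second Routh--Hurwitz expression into a polynomial in $m$ whose leading coefficient in $\epsilon$ is positive; the sign for small $m$ is controlled by the lowest-order term, which after simplification is proportional to $m^2$ times $\big[(2-\rho_\infty)^2\big((1-2\rho_\infty)(1-\rho_\infty)\big)-\delta^2(1-\rho_\infty)\big]$ up to positive factors, i.e. the stability threshold at leading order in $k$ is the vanishing of a quadratic form in $(r_\infty,b_\infty)$. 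Setting this expression to zero and solving for $b_\infty$ in terms of $r_\infty$ (using $\rho_\infty=r_\infty+b_\infty$, $\delta=r_\infty-b_\infty$) yields exactly the two branches
\[
b=\frac{-6+9r-4r^2}{-9+8r}\pm 4\sqrt{\frac{2r-3r^2+r^4}{(-9+8r)^2}},
\]
truncated against the physical constraint $b\le 1-r$; this is how the region $\mathcal{D}$ and the curves $\gamma_{1,2}$ arise. One then checks that outside $\overline{\mathcal{D}}$ the full Routh--Hurwitz expression is positive for \emph{all} $m>0$ (the higher-order-in-$\epsilon$ and higher-order-in-$m$ corrections only help, since they come with positive coefficients when $(1-\rho_\infty)\ge0$), giving asymptotic stability, whereas inside $\mathcal{D}$ it is negative for small $m$, giving an unstable long-wave mode.

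Finally, for the wave-number bound I would isolate the set of $k$ for which the second Routh--Hurwitz expression is negative when $(r_\infty,b_\infty)\in\mathcal{D}$. Since that expression, as a polynomial in $m=k^2\pi^2$, changes sign exactly once on $m>0$ (its coefficients have a single sign change — negative constant-type term from the unstable regime, positive $\epsilon^2$-coefficients), the instability window is an interval $0<k<k_{\max}$, and solving the resulting quadratic in $k^2$ for its positive root gives
\[
k_{\max}=\frac{1}{\epsilon\pi}\sqrt{\frac{-4+\rho_\infty\big(12-8r^2+\rho_\infty(-9+8r)\big)}{(-2+\rho_\infty)^2}},
\]
which is the stated bound; for $k\ge k_{\max}$ diffusion dominates and all modes decay.

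\medskip

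The routine but genuinely laborious part — and the step most likely to hide sign errors — is the algebraic reduction of the complex Routh--Hurwitz expression to the quadratic form whose zero set is $\gamma_{1,2}$, together with verifying that the higher-order terms in $m$ do not reverse the sign outside $\mathcal{D}$; this is precisely the computation the authors delegate to Mathematica. The conceptual content is just the complex Routh--Hurwitz criterion plus the observation that the threshold is attained in the long-wave limit $k\to0$, the regime where the (stabilizing) $O(\epsilon^2k^4)$ and $O(\epsilon k^3)$ terms are negligible.
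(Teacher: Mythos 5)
The paper offers no analytic proof of this proposition at all --- it states only that the result ``has been calculated using Mathematica'' --- so your complex Routh--Hurwitz argument is a genuinely different (indeed, the only written-out) route, and it does work. Writing $p_{D_F}(\lambda)=\lambda^2+p\lambda+q$ and using the criterion $\operatorname{Re}(p)>0$ together with $E:=\operatorname{Re}(p)\operatorname{Im}(p)\operatorname{Im}(q)+\operatorname{Re}(p)^2\operatorname{Re}(q)-\operatorname{Im}(q)^2>0$ is the correct stability test for a complex monic quadratic, and carrying the algebra through with $m=k^2\pi^2$ and $\delta=r_\infty-b_\infty$ gives the clean factorization
\[
E=\epsilon^2 m^3(1-\rho_\infty)\bigl[\,2\rho_\infty\delta^2+(2-\rho_\infty)^2(1-2\rho_\infty)+\epsilon^2 m(2-\rho_\infty)^2\,\bigr],
\]
whose bracket is \emph{linear} in $m$ with positive slope. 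This yields everything at once: setting the $m$-independent part to zero and substituting $\rho_\infty=r+b$, $\delta=2r-\rho_\infty$ gives the quadratic $(8r-9)b^2+(12-18r+8r^2)b+(12r-9r^2-4)=0$, whose roots are exactly the branches $\gamma_{1,2}$ (the downward-opening parabola in $b$ makes $\mathcal{D}$ the region between them); solving the bracket $=0$ for $m$ gives precisely the stated $k_{\max}$; and the linearity in $m$ makes your claim of a single sign change, hence an instability window $0<k<k_{\max}$, immediate. What buys you this over the paper's computer-algebra shortcut is a human-checkable derivation and the structural insight that the threshold is attained in the long-wave limit.

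The one concrete error is your displayed leading-order bracket: the coefficient of $\delta^2$ is $+2\rho_\infty$, not $-1$ (and the overall prefactor is $m^3$, not $m^2$). This is not absorbed by ``up to positive factors'': with your sign a large imbalance $|r_\infty-b_\infty|$ would be destabilizing, whereas it is in fact stabilizing, and the zero set of your expression is not $\gamma_{1,2}$. You flagged this algebra as the error-prone step, and the corrected bracket does land exactly on the stated curves and wave-number bound, so the proposal is structurally sound but that intermediate formula must be fixed for the argument to close.
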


\begin{figure}[h!]
\begin{center}
\includegraphics[height=50mm, width=50mm]{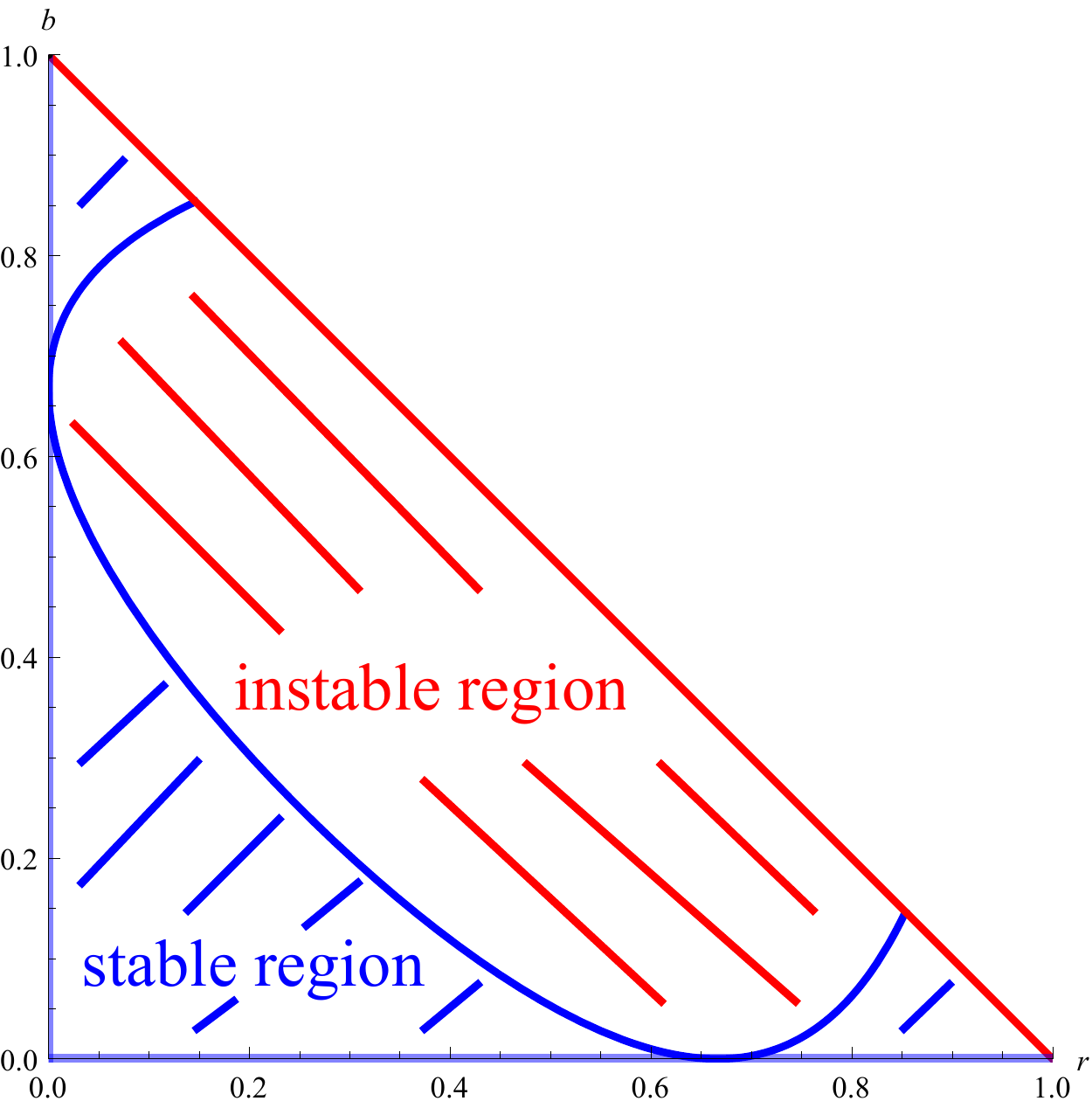}
\caption{Separation of linearly stable and instable region. Note that the blue line still belongs to the stable region and the red line to the unstable one.} \label{f:linearstability_region1D}
\end{center}
\end{figure}

\begin{remark}
If we study the stability of solutions to system \eqref{hypsys1D} in the case of linear diffusion as in Ref. \cite{AppertRolland2011}, we obtain linear stability in the hyperbolic region and instability in the elliptic region, cf. Figure \ref{f:hyperbolic_region1D}.
\end{remark}

\subsection{Local $L^2$-stability resulting from a Lyapunov functional}
In this section, we want to construct a positive entropy functional which can be used to prove local $L^2$-stability of equilibrium solutions to system \eqref{parabolic_1D}. Our results on linear stability already indicate that this will only be possible in a subdomain of $\mathcal{M}$.\\
Note that system \eqref{parabolic_1D} can equivalently be written as
\begin{align*}
\begin{aligned}
\begin{pmatrix}
\pa_t r\\\pa_t b
\end{pmatrix} =\begin{pmatrix}
-\pa_x ((1-\r)r)+\epsilon (\pa_x((1-\rho)\pa_x r+r\pa_x \rho)) \\ \pa_x ((1-\r)b)+\epsilon (\pa_x((1-\rho)\pa_x b+b\pa_x \rho))
\end{pmatrix}.
\end{aligned}
\end{align*}
By adding and subtracting the equations we obtain:
\begin{align*}
\begin{aligned}
\begin{pmatrix}
\pa_t \rho\\\pa_t (r-b)
\end{pmatrix} =\begin{pmatrix}
-\pa_x ((1-\r)(r-b))+\epsilon \pa_x^2\rho \\ -\pa_x ((1-\r)\rho)+\epsilon (\pa_x((1-\rho)\pa_x (r-b)+(r-b)\pa_x \rho))
\end{pmatrix}.
\end{aligned}
\end{align*}
Introducing the new unknowns $\xi:=1-\rho$ and $\eta:=r-b$ gives
\begin{align}\label{parabolic_1D_2}
\begin{aligned}
\begin{pmatrix}
\pa_t \xi \\\pa_t \eta
\end{pmatrix} =\begin{pmatrix}
\pa_x (\eta \xi)+\epsilon \pa_x^2 \xi \\ -\pa_x (\xi(1-\xi))+\epsilon (\pa_x(\xi\pa_x \eta-\eta\pa_x \xi))
\end{pmatrix}.
\end{aligned}
\end{align}

Note that due to mass conservation, $\xi$ and $\eta$ are conserved quantities, i.e.
\begin{align}\label{mass_cons}
\frac{\mathrm{d}}{\mathrm{d}t} \int_\Omega \eta \,dx =\frac{\mathrm{d}}{\mathrm{d}t}\int_\Omega \xi \, dx =0.
\end{align} 

\begin{theorem}
The entropy functional 
\begin{equation}
\label{ent}
\mathcal{J}:=\frac{1}{2}\int_\Omega \eta^2-2(\xi (\log \xi-1)+1)+2(1-\xi)^2 \, dx
\end{equation}
is nonnegative for $(\eta ,\xi) \in [-1,1] \times [0,1]$ and non-increasing in time if
\begin{equation}\label{rest}
\xi \geq \frac{1}{2}+\frac{1}{4\delta} \qquad \textnormal{for  } \ \delta\leq 2. 
\end{equation}
\end{theorem}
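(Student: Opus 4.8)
The plan is to establish the two assertions — nonnegativity and time‑monotonicity — separately; neither needs more than elementary manipulations. For nonnegativity, observe that the integrand of $\mathcal{J}$ splits as $\tfrac12\eta^2+\tfrac12\psi(\xi)$ with $\psi(\xi):=-2(\xi(\log\xi-1)+1)+2(1-\xi)^2=2\xi\bigl(\xi-1-\log\xi\bigr)$; since $\xi-1-\log\xi\ge0$ for every $\xi>0$ (strict convexity of $-\log$, with equality only at $\xi=1$), the integrand is pointwise nonnegative on $[-1,1]\times[0,1]$ (its value at $\xi=0$ being $0$ by continuous extension), and hence $\mathcal{J}\ge0$.

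For monotonicity I would differentiate $\mathcal{J}$ along solutions of \eqref{parabolic_1D_2}, i.e.\ compute $\tfrac{d\mathcal{J}}{dt}=\int_\Omega\bigl(\eta\,\pa_t\eta+\tfrac12\psi'(\xi)\,\pa_t\xi\bigr)\,dx$, insert the two equations of \eqref{parabolic_1D_2}, and integrate by parts using the periodic boundary conditions (no boundary contributions). It is convenient to keep the first‑order advective terms $\pa_x(\eta\xi)$ and $-\pa_x(\xi(1-\xi))$ separate from the $\epsilon$‑diffusive ones. The essential structural point is that $\psi$ has been chosen precisely so that $\tfrac12\eta^2+\tfrac12\psi(\xi)$ is a conserved entropy for the advective part: one checks the identity $\xi\psi''(\xi)=4\xi-2$, which is exactly what makes the two advective contributions $\int_\Omega\pa_x\eta\,\xi(1-\xi)\,dx$ and $-\tfrac12\int_\Omega(4\xi-2)\,\eta\,\pa_x\xi\,dx$ cancel after one further integration by parts. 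What remains is purely diffusive,
\[
\frac{d\mathcal{J}}{dt}=-\epsilon\int_\Omega\Bigl(\xi(\pa_x\eta)^2-\eta\,\pa_x\eta\,\pa_x\xi+\bigl(2-\tfrac{1}{\xi}\bigr)(\pa_x\xi)^2\Bigr)\,dx .
\]

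It then suffices to show the integrand is pointwise nonnegative under \eqref{rest}. The bracket is the quadratic form of the symmetric matrix $\bigl(\begin{smallmatrix}\xi & -\eta/2\\ -\eta/2 & 2-1/\xi\end{smallmatrix}\bigr)$; since $\xi\ge\tfrac12+\tfrac1{4\delta}>0$, positive semidefiniteness is equivalent to nonnegativity of the determinant, i.e.\ $\xi\bigl(2-\tfrac1\xi\bigr)\ge\tfrac{\eta^2}{4}$, that is $2\xi-1\ge\tfrac{\eta^2}{4}$. Now \eqref{rest} reads exactly $2\xi-1\ge\tfrac1{2\delta}$, and the restriction $\delta\le2$ gives $\tfrac1{2\delta}\ge\tfrac14\ge\tfrac{\eta^2}{4}$, since $|\eta|=|r-b|\le\rho\le1$; hence $2\xi-1\ge\eta^2/4$, the bracket is $\ge0$, and therefore $\tfrac{d\mathcal{J}}{dt}\le0$. (Equivalently one may obtain the same conclusion through a single Young inequality with weight $\delta$ applied to the cross term $\eta\,\pa_x\eta\,\pa_x\xi$, which is presumably the way the parameter $\delta$ is introduced.)

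I do not expect a genuine analytic obstacle here: the two things that require care are (i) justifying the integrations by parts, which means tacitly assuming the solution is smooth enough and takes values in $\overline{\mathcal{M}}$ so that $|\eta|\le1$, $0<\xi\le1$, and the lower bound \eqref{rest} holds uniformly in $x$ and $t$ — so the statement is a conditional, local result — and (ii) spotting the exact cancellation of the first‑order terms, which is precisely where the particular form of $\mathcal{J}$ (equivalently of $\psi$) is used. Once those are in place the sign of $\tfrac{d\mathcal{J}}{dt}$ reduces to the one‑line determinant computation above.
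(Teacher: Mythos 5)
Your proof is correct and follows essentially the same route as the paper: the same pointwise nonnegativity of the integrand, the same computation of $\tfrac{d\mathcal{J}}{dt}$ with cancellation of the advective terms, and the same residual dissipation integrand $\xi(\pa_x\eta)^2-\eta\,\pa_x\eta\,\pa_x\xi+(2-\tfrac1\xi)(\pa_x\xi)^2$. The only cosmetic difference is in the last step, where you verify positive semidefiniteness via the determinant criterion while the paper applies Young's inequality with weight $\delta$ (as you correctly guessed); the two are equivalent here, though the paper's version additionally produces the quantitative dissipation bound \eqref{diss} that is reused in the subsequent $L^2$-stability argument.
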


\begin{proof}
The nonnegativity of $\mathcal{J}$ follows from
\[-(\xi (\log \xi-1)+1)+(1-\xi)^2 \geq 0 \text{ for } \xi\in [0,1]\,.\] 
Using the property \eqref{mass_cons}, we obtain
\begin{align*}
\frac{\mathrm{d}\mathcal{J}}{\mathrm{d}t}&=\int_\Omega  \eta \pa_t \eta + 2 \xi \pa_t \xi  - \log \xi \pa_t \xi \, dx\\
&=\int_\Omega -\eta \pa_x\xi +2\eta \xi \pa_x \xi -\epsilon(\xi (\pa_x \eta)^2-\eta \pa_x \xi \pa_x \eta)\,dx\\
&\quad +\int_\Omega-2\eta \xi \pa_x \xi -2\epsilon(\pa_x \xi)^2+ \eta \pa_x \xi +\epsilon \frac{1}{\xi} (\pa_x \xi)^2\,dx\\
&= -\epsilon \int_\Omega \xi (\pa_x \eta)^2-\eta \pa_x \xi \pa_x \eta+2(\pa_x \xi)^2- \frac{1}{\xi} (\pa_x \xi)^2\,dx.\\
\end{align*}
Using Young's inequality to estimate the mixed term, i.e.
\[\epsilon \int_\Omega \eta \pa_x \xi \pa_x \eta \,dx =\epsilon \int_\Omega \frac{\eta}{\sqrt{\delta \xi}} \pa_x \xi \sqrt{\delta \xi}\pa_x \eta \,dx \leq \epsilon \int_\Omega \frac{\eta^2}{2\delta\xi}(\pa_x \xi)^2+\frac{\delta\xi}{2}(\pa_x \eta)^2 \,dx, \]
we obtain 
\begin{align}\label{diss}
\frac{\mathrm{d}\mathcal{J}}{\mathrm{d}t}&\leq -\epsilon \int_\Omega \left(2-\left(1+\frac{1}{2\delta}\right)\frac{1}{\xi}\right) (\pa_x \xi) ^2\, dx -\epsilon\left(1-\frac{\delta}{2}\right) \int_\Omega \xi(\pa_x \eta)^2\, dx.
\end{align}
Hence, the entropy functional is decreasing in time for $\delta\leq 2$ and $\xi \geq \frac{1}{2}+\frac{1}{4\delta}$.
\end{proof}
The aim is to show that small perturbations of the equilibrium solutions $(\xi_\infty,\eta_\infty)$ decay in time and we therefore introduce the corresponding relative entropy functional to \eqref{ent}: 
\begin{align}\label{rel.ent}
\begin{split}
\mathcal{J}_{\text{rel}}:=&\frac{1}{2}\int_\Omega (\eta-\eta_\infty)^2-2\xi_\infty\left(\frac{\xi}{\xi_\infty}\left( \log\frac{\xi}{\xi_\infty}-1\right)+1\right)+2(\xi-\xi_\infty)^2\, dx
\end{split}
\end{align}
which satisfies the same entropy dissipation inequality as $\mathcal{J}$ in \eqref{diss}. To guarantee that \eqref{rest} holds for all times $t > 0$, we add another term to the entropy functional. This term allows us to control the $H^1$-norm, and therefore by Sobolev-imbedding, the $L^{\infty}-$ norm of the perturbation $\xi-\xi_\infty$. Differentiating the equation for $\xi$ and testing it with $\pa_x \xi$ we obtain the a priori estimate
\begin{align*}
\begin{aligned}
\frac{1}{2}\frac{\mathrm{d}}{\mathrm{d}t}\int_\Omega (\pa_x \xi)^2\, dx&=\int_\Omega- \pa_x(\eta \xi)\pa_x^2\xi-\epsilon(\pa_x^2\xi)^2\,dx\\
&=\int_\Omega- (\xi\pa_x\eta+\eta\pa_x\xi)\pa_x^2\xi-\epsilon(\pa_x^2\xi)^2\,dx\\  
&\leq\frac{1}{2\epsilon} \int_\Omega \xi^2(\pa_x \eta)^2+\eta^2(\pa_x \xi)^2\,dx +\frac{\epsilon}{2}\int_\Omega (\pa_x^2 \xi)^2 \,dx -\epsilon\int_\Omega (\pa_x^2\xi)^2\,dx\\
&= \frac{1}{2\epsilon}\int_\Omega \xi^2(\pa_x \eta)^2+\eta^2(\pa_x \xi)^2\,dx -\frac{\epsilon}{2}\int_\Omega (\pa_x^2 \xi)^2 \,dx. 
\end{aligned}
\end{align*}
Combining the latter with $\mathcal{J}_{rel}$ we obtain the relative entropy
\begin{align*}
\tilde{\mathcal{J}}_{\text{rel}}:=&\frac{1}{2}\int_\Omega (\eta-\eta_\infty)^2+\alpha\epsilon^2(\pa_x (\xi-\xi_\infty))^2\\
\qquad &-2\xi_\infty\left(\frac{\xi}{\xi_\infty}\left( \log\frac{\xi}{\xi_\infty}-1\right)+1\right)+2(\xi-\xi_\infty)^2\, dx
\end{align*}
for some $\alpha>0$. The relative entropy $\tilde{\mathcal{J}}_{rel}$ is defined in such a way that the nonnegativity is preserved for $\xi_\infty >\frac{1}{2}$ and satisfies 
\begin{align}\label{L2estimate}
{\tilde{\mathcal{J}}}_{\text{rel}}\geq \frac{1}{2}\|\eta-\eta_\infty\|_{L^2(\Omega)}^2+\alpha \epsilon^2 C_p\|\xi-\xi_\infty\|_{L^2(\Omega)}^2,
\end{align}
where $C_p$ denotes the constant resulting from the Poincar\'e inequality
\begin{align*}
\|\xi-\xi_\infty\|_{L^2(\Omega)}^2\leq C_p^{-1} \|\pa_x (\xi-\xi_\infty)\|_{L^2(\Omega)}^2.
\end{align*}
Furthermore, we have (by Sobolev embedding)
\begin{align}\label{Linfestimate}
{\tilde{\mathcal{J}}}_{\text{rel}}\geq C_1(\alpha\epsilon^2)\|\xi-\xi_\infty\|_{H^1(\Omega)}^2\geq C(\alpha\epsilon^2)\|\xi-\xi_\infty\|_\infty^2,
\end{align}
for some positive constants $C_1,C$ both depending on $\alpha \epsilon^2$. 
Moreover 
\[{\tilde{\mathcal{J}}}_{\text{rel}}=0  \iff  (\eta,\xi)=(\eta_\infty,\xi_\infty).\]
Thus, ${\tilde{\mathcal{J}}}_{\text{rel}}$ is a Lyapunov functional and due to its control of the $H^1$-norm of $\xi-\xi_\infty$, it allows us to guarantee the conservation of \eqref{rest} (provided that the initial perturbation is sufficiently small). This leads to the following asymptotic stability result. 

\begin{theorem}[$L^2$- asymptotic stability of equilibria]
Let $\delta \leq 2$ and $(\eta_\infty,\xi_\infty)$ be an equilibrium solution satisfying 
\begin{equation}\label{xiinf}
\xi_\infty=\frac{1}{2}+\frac{1}{4\delta}+\beta
\end{equation} 
for some small $\beta>0$. Moreover, let the initial data $(\eta_0,\xi_0)$ with mean $(\overline{\eta_0},\overline{\xi_0})=(\eta_\infty,\xi_\infty)$ be such that 
\begin{equation}
\tilde{\mathcal{J}}_{\text{rel}}(0)\leq C(\alpha \epsilon^2)\frac{\beta^2}{4}\,,
\end{equation}
where $ C(\alpha \epsilon^2)$ is the positive constant from estimate \eqref{Linfestimate}. Then, the solution $(\eta(t),\xi(t))$ to system \eqref{parabolic_1D} satisfies 
\begin{equation}\label{L2decay}
[\|\eta(t)-\eta_\infty\|_{L^2(\Omega)}+\|\xi(t)-\xi_\infty\|_{L^2(\Omega)}]\rightarrow 0 .
\end{equation}
\end{theorem}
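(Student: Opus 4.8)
The plan is to combine the Lyapunov (entropy dissipation) inequality with the $L^\infty$-control of the perturbation $\xi-\xi_\infty$ in a standard continuation/bootstrap argument. First I would set up the relative entropy $\tilde{\mathcal J}_{\mathrm{rel}}$ and note that, by the same computation that produced \eqref{diss} (now applied to the shifted quantities $\xi-\xi_\infty$, $\eta-\eta_\infty$, using mass conservation \eqref{mass_cons} to kill the linear-in-time terms), one has a dissipation inequality of the form $\tfrac{d}{dt}\tilde{\mathcal J}_{\mathrm{rel}} \le -\epsilon\int_\Omega \big(2-(1+\tfrac{1}{2\delta})\tfrac1\xi\big)(\pa_x\xi)^2\,dx - \epsilon(1-\tfrac\delta2)\int_\Omega \xi(\pa_x\eta)^2\,dx + (\text{terms from the extra } \alpha\epsilon^2(\pa_x(\xi-\xi_\infty))^2 \text{ piece})$. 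The extra term coming from differentiating the $\xi$-equation and testing with $\pa_x\xi$ produces, as computed just before the theorem, a contribution $\le \tfrac{1}{2\epsilon}\int_\Omega \xi^2(\pa_x\eta)^2+\eta^2(\pa_x\xi)^2\,dx - \tfrac{\epsilon}{2}\int_\Omega(\pa_x^2\xi)^2\,dx$; after multiplying by $\alpha\epsilon^2$ and choosing $\alpha$ small enough, the positive pieces $\tfrac{\alpha\epsilon}{2}\int \xi^2(\pa_x\eta)^2$ and $\tfrac{\alpha\epsilon}{2}\int\eta^2(\pa_x\xi)^2$ are absorbed by the strictly negative terms $-\epsilon(1-\tfrac\delta2)\int\xi(\pa_x\eta)^2$ and $-\epsilon\int(2-(1+\tfrac1{2\delta})\tfrac1\xi)(\pa_x\xi)^2$, provided $\xi$ stays in the admissible range \eqref{rest}; here one uses $0\le\xi,|\eta|\le 1$ to bound $\xi^2\le\xi$ and $\eta^2\le 1$. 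The net outcome is
\begin{align}\label{net}
\frac{d}{dt}\tilde{\mathcal J}_{\mathrm{rel}} \le -c\,\epsilon\int_\Omega (\pa_x(\xi-\xi_\infty))^2 + \xi(\pa_x(\eta-\eta_\infty))^2\,dx \le 0
\end{align}
for some $c>0$, valid as long as the pointwise constraint \eqref{rest} holds.

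The second step is the continuation argument that keeps \eqref{rest} valid. Define $T^* := \sup\{t>0 : \xi(s,x)\ge \tfrac12+\tfrac1{4\delta} \text{ for all } (s,x)\in[0,t]\times\Omega\}$. At $t=0$ the hypothesis $\tilde{\mathcal J}_{\mathrm{rel}}(0)\le C(\alpha\epsilon^2)\beta^2/4$ together with \eqref{Linfestimate} gives $\|\xi_0-\xi_\infty\|_\infty \le \beta/2$, hence $\xi_0\ge \xi_\infty-\beta/2 = \tfrac12+\tfrac1{4\delta}+\beta/2 > \tfrac12+\tfrac1{4\delta}$, so $T^*>0$. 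On $[0,T^*)$ inequality \eqref{net} holds, so $\tilde{\mathcal J}_{\mathrm{rel}}(t)\le\tilde{\mathcal J}_{\mathrm{rel}}(0)\le C(\alpha\epsilon^2)\beta^2/4$; feeding this back into \eqref{Linfestimate} yields $\|\xi(t)-\xi_\infty\|_\infty\le\beta/2$ for all $t<T^*$, i.e. $\xi(t,x)\ge\tfrac12+\tfrac1{4\delta}+\beta/2$ with a uniform gap. By continuity this strict inequality persists past $T^*$ unless $T^*=\infty$; hence $T^*=\infty$ and \eqref{net} holds for all time.

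The final step is to extract the decay \eqref{L2decay}. From $\tilde{\mathcal J}_{\mathrm{rel}}$ being nonincreasing and bounded below by $0$, it has a limit, so the right-hand side of \eqref{net}, integrated over $(0,\infty)$, is finite; thus $\int_0^\infty\!\int_\Omega(\pa_x(\xi-\xi_\infty))^2\,dx\,dt<\infty$ and likewise $\int_0^\infty\!\int_\Omega\xi(\pa_x(\eta-\eta_\infty))^2\,dx\,dt<\infty$. Since $\xi$ is uniformly bounded below by the positive constant $\tfrac12+\tfrac1{4\delta}$, the latter controls $\int_0^\infty\|\pa_x(\eta-\eta_\infty)\|_{L^2}^2\,dt$. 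Using the Poincaré inequality \eqref{L2estimate} (the perturbations have zero mean by the choice $(\overline{\eta_0},\overline{\xi_0})=(\eta_\infty,\xi_\infty)$ and mass conservation), the dissipation bounds $\|\xi-\xi_\infty\|_{L^2}^2$ and $\|\eta-\eta_\infty\|_{L^2}^2$, so their time integrals are finite. To upgrade "integrable" to "$\to 0$" I would show the map $t\mapsto \|\eta(t)-\eta_\infty\|_{L^2}^2+\|\xi(t)-\xi_\infty\|_{L^2}^2$ is uniformly continuous — e.g. by bounding its time derivative using the equations \eqref{parabolic_1D_2} together with the now-established uniform bounds and the finiteness of the dissipation integral — and then invoke the standard fact that an integrable, uniformly continuous nonnegative function tends to $0$ (Barbalat's lemma). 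Alternatively, one can simply note that $\tilde{\mathcal J}_{\mathrm{rel}}(t)\to$ some limit $\ell\ge0$ and argue $\ell=0$: if $\ell>0$ then by \eqref{Linfestimate} the perturbation stays bounded away from the equilibrium in $H^1$, which by \eqref{net} and the coercivity of the dissipation (again via Poincaré) forces $\tfrac{d}{dt}\tilde{\mathcal J}_{\mathrm{rel}}\le -\kappa\,\tilde{\mathcal J}_{\mathrm{rel}}$ for some $\kappa>0$ along a subsequence of times, contradicting $\ell>0$; this gives exponential decay of $\tilde{\mathcal J}_{\mathrm{rel}}$ and hence \eqref{L2decay}.

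The main obstacle is the absorption step in the first paragraph: one must choose $\alpha$ (the weight of the $\epsilon^2(\pa_x(\xi-\xi_\infty))^2$ term) small enough that the $-\tfrac{\alpha\epsilon}{2}\int(\pa_x^2\xi)^2$ gain is not needed while the bad cross terms $\tfrac{\alpha\epsilon}{2}\int\eta^2(\pa_x\xi)^2$ and $\tfrac{\alpha\epsilon}{2}\int\xi^2(\pa_x\eta)^2$ are dominated by the original negative terms uniformly in the solution — and this is exactly where the strict inequality $\xi_\infty>\tfrac12$ (equivalently the spare $\beta>0$) is consumed to keep the coefficient $2-(1+\tfrac1{2\delta})\tfrac1\xi$ bounded below by a positive constant. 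Making these thresholds compatible (the $\alpha$ needed for absorption, the $\beta$ needed for the continuation, and the $C(\alpha\epsilon^2)$ in the smallness condition on the initial data) is the delicate bookkeeping of the proof; everything else is routine energy-estimate manipulation.
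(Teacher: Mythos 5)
Your proposal is correct and follows essentially the same route as the paper: the same relative entropy $\tilde{\mathcal{J}}_{\text{rel}}$ augmented by the $\alpha\epsilon^2\|\pa_x(\xi-\xi_\infty)\|_{L^2}^2$ term, the same bootstrap/continuation argument using \eqref{Linfestimate} to propagate $\xi\ge\tfrac12+\tfrac{1}{4\delta}+\tfrac{\beta}{2}$, and the same conclusion from the dissipation inequality combined with the Poincar\'e estimate \eqref{L2estimate}. You actually spell out two steps the paper leaves implicit (the choice of $\alpha$ small in the absorption of the cross terms, and the Barbalat/Gronwall argument upgrading integrability of the dissipation to the decay \eqref{L2decay}), which is a faithful completion rather than a different method.
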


\begin{proof}
As we have seen in \eqref{Linfestimate} the relative entropy ${\tilde{\mathcal{J}}}_{\text{rel}}$ controls the $L^\infty$-norm of the perturbation. Thus, if 
\begin{equation}\label{bound0}
\tilde{\mathcal{J}}_{\text{rel}}\leq C(\alpha \epsilon^2)\frac{\beta^2}{4}\,,\qquad \textnormal{then}\qquad \|\xi-\xi_\infty\|_\infty\leq \frac{\beta}{2}\,,
\end{equation}
such that with \eqref{xiinf} we obtain
\begin{eqnarray}\label{bound}
\xi - \left(\frac12+\frac{1}{4\delta}\right)=\xi-\xi_\infty+\beta\geq \frac{\beta}{2},
\end{eqnarray}
in particular guaranteeing \eqref{rest}. 
Since the initial data is such that ${\tilde{\mathcal{J}}}_{\text{rel}}(0)\leq C(\alpha \epsilon^2)\frac{\beta^2}{4}$, 
we know that initially the relative entropy is decaying, i.e.
\[\frac{d}{dt}\tilde{\mathcal{J}}_{\text{rel}}(0)\leq 0\,.\]
Hence there exists a small time $t_1$ such that $\tilde{\mathcal{J}}_{\text{rel}}(t_1)\leq \tilde{\mathcal{J}}_{\text{rel}}(0)$ and we can repeat the argument. 
Thus \eqref{bound} holds for all times and we have 
\begin{eqnarray}
\frac{\mathrm{d}\mathcal{\tilde{\mathcal{J}}}_{\text{rel}}}{\mathrm{d}t}&\leq& -C_2(\beta\epsilon)\int_\Omega (\pa_x \xi) ^2+ (\pa_x \eta)^2\, dx,\nonumber
\label{entropy_estimate}
\end{eqnarray}
where $C_2$ is a positive constant depending on $\beta \epsilon$.
This, together with \eqref{L2estimate}, implies \eqref{L2decay}. 
\end{proof}

\begin{remark}
We shall emphasize that the lower threshold for $\xi$ corresponds to an upper bound for $\rho$, which is in line with the linear stability result from Section \ref{linearstability}.
\end{remark}

\subsection{Numerical simulations in $1D$} %with program crossing_1D
We conclude by illustrating the behavior of the one-dimensional model \eqref{parabolic_1D}. All simulations were performed using the COMSOL Multiphysics Package with quadratic finite elements. We consider the domain $\Omega= [0,1]$ (split into $100$ intervals) with periodic boundary conditions  and a BDF method with maximum time step $0.1$.
\subsubsection{Example I}
Let $\epsilon=0.005$ and $(r_\infty,b_\infty)=(0.3,0.3)\in \mathcal{D}$. We recall that $\mathcal{D}$ is the area in which the system is unstable, see Proposition \ref{theorem_linstab}. Starting with a slight perturbation of the equilibrium solutions, i.e. 
\begin{align*}
r_0(x)&=r_\infty+0.02\sin(2\pi x),\\
b_0(x)&=b_\infty-0.02\sin(2\pi x),
\end{align*}
we observe the formation of instabilities as Figure \ref{f:1Dex1} illustrates.
\begin{figure}[h!]
\begin{center}
\subfigure[$r_T$ for $T=100$]{\includegraphics[trim = 0mm 8mm 0mm 0mm, clip,height=38mm, width=41mm]{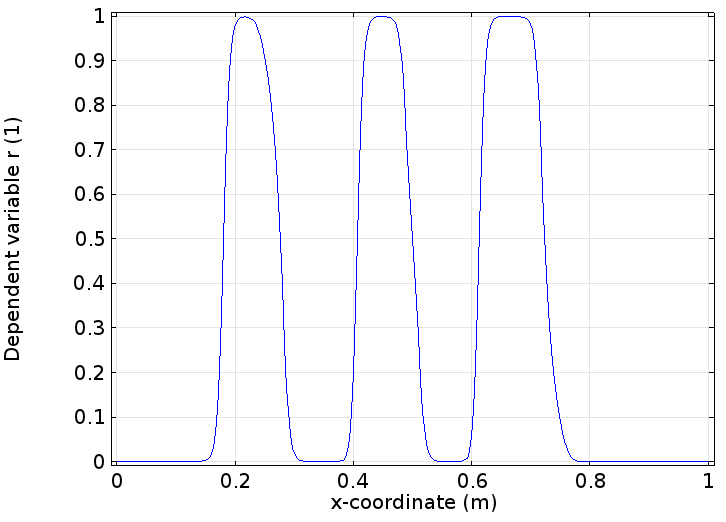}}
\subfigure[$b_T$ for $T=100$]{\includegraphics[trim = 0mm 8mm 0mm 0mm, clip,height=38mm, width=41mm]{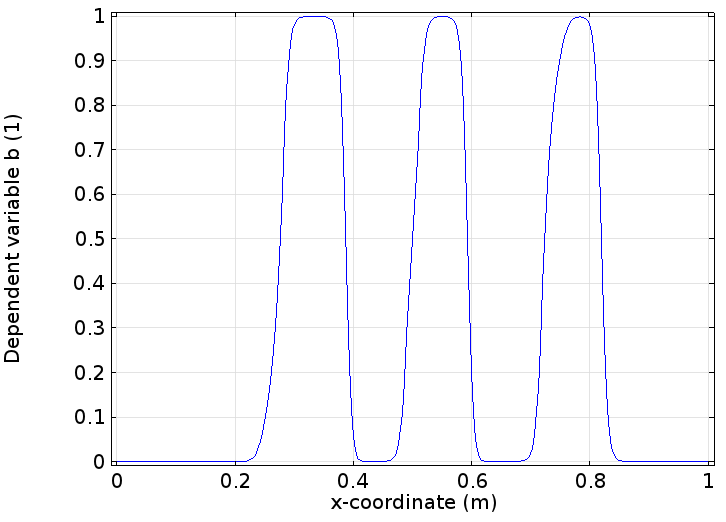}}
\subfigure[$\rho_T$ for $T=100$]{\includegraphics[trim = 0mm 8mm 0mm 0mm, clip,height=38mm, width=41mm]{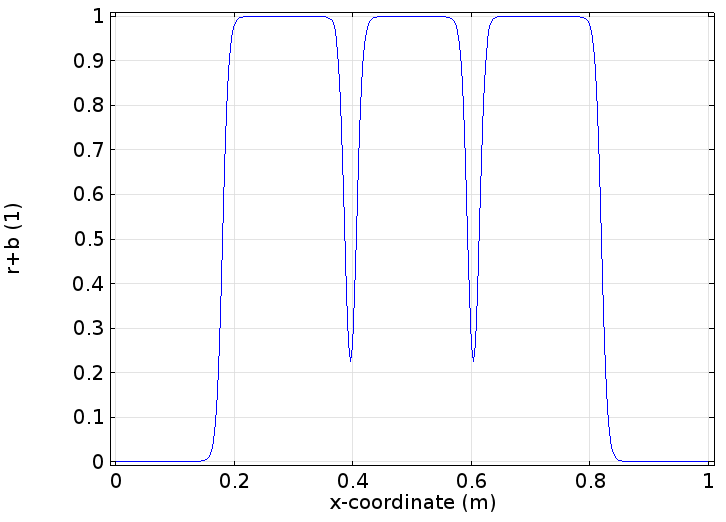}}
\caption{Small perturbations resulting in the formation of shocks in the instable regime.} \label{f:1Dex1}
\end{center}
\end{figure}
Changing the initial values a little, i.e.
\begin{align*}
r_0(x)&=r_\infty+0.01\cos(2\pi x),\\
b_0(x)&=b_\infty-0.01\cos(2\pi x),
\end{align*}
we get a different result, cf. Figure \ref{f:1Dex2}.
\begin{figure}[h!]
\begin{center}
\subfigure[$r_T$ for $T=100$]{\includegraphics[trim = 0mm 8mm 0mm 0mm, clip,height=38mm, width=41mm]{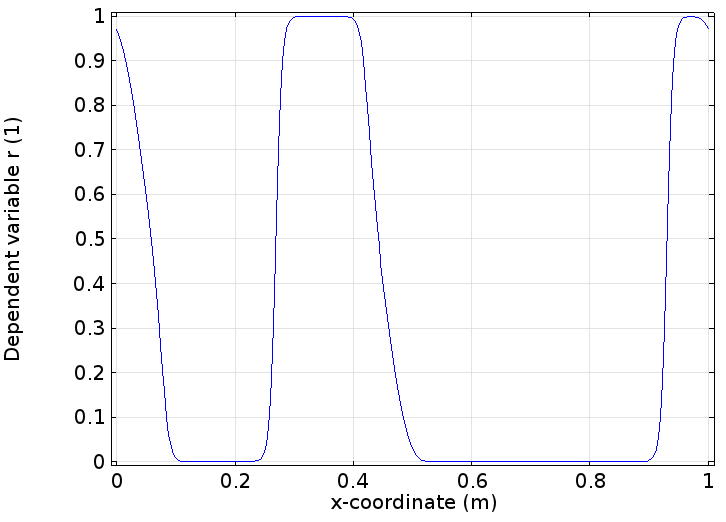}}
\subfigure[$b_T$ for $T=100$]{\includegraphics[trim = 0mm 8mm 0mm 0mm, clip,height=38mm, width=41mm]{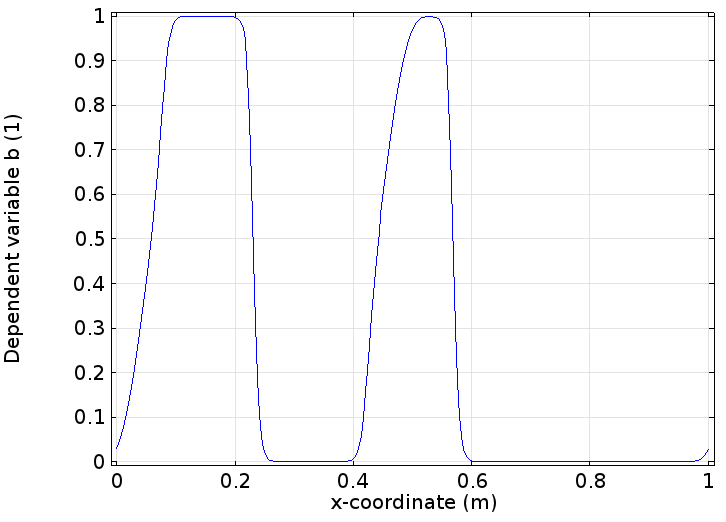}}
\subfigure[$\rho_T$ for $T=100$]{\includegraphics[trim = 0mm 8mm 0mm 0mm, clip,height=38mm, width=41mm]{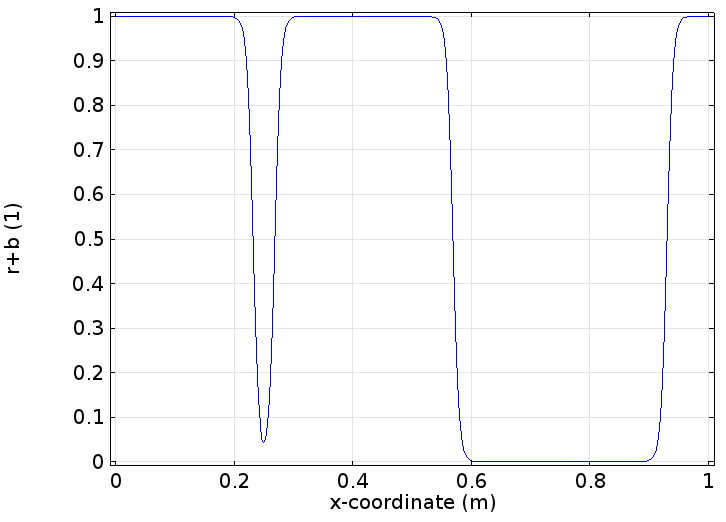}}
\caption{Small perturbations resulting in the formation of shocks in the instable regime.} \label{f:1Dex2}
\end{center}
\end{figure}

\subsubsection{Example II}
If we start outside the unstable region, for example by setting $(r_\infty,b_\infty)=(0.85,0.1)\notin \mathcal{D}$, the solution should go back to its equilibrium value in the case of small perturbations. We set  $\epsilon=0.005$ and  
\begin{align*}
r_0(x)&=r_\infty+0.01\sin(2\pi x),\\
b_0(x)&=b_\infty-0.01\sin(2\pi x),
\end{align*}
and observe the expected behavior in Figure \ref{f:1Dex3}.
\begin{figure}[h!]
\begin{center}
\subfigure[$r_0$ and $r_T$ for $T=1000$]{\includegraphics[trim = 0mm 8mm 0mm 0mm, clip,height=38mm, width=41mm]{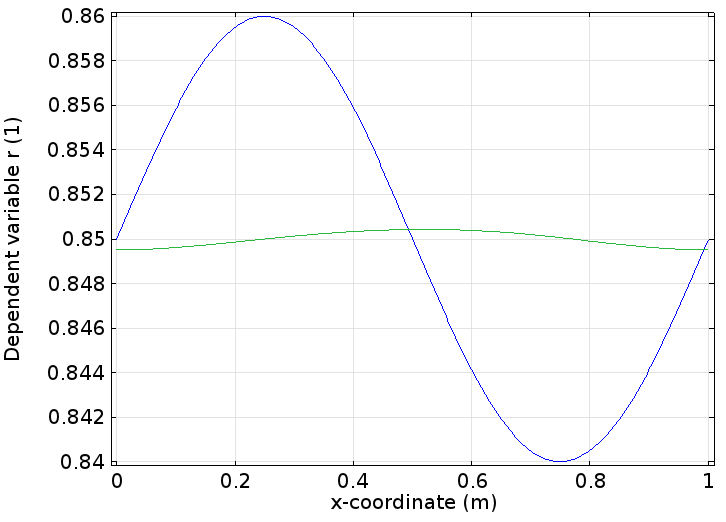}}
\subfigure[$b_0$ and $b_T$ for $T=1000$]{\includegraphics[trim = 0mm 8mm 0mm 0mm, clip,height=38mm, width=41mm]{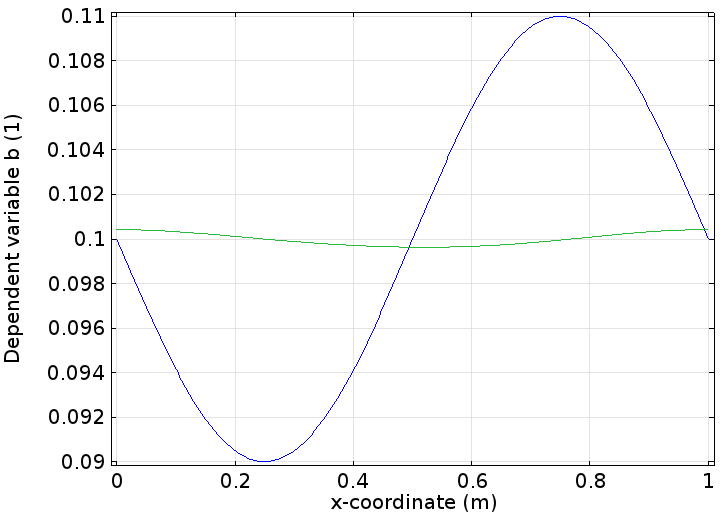}}
\caption{Perturbation smoothing out in the linearly stable regime.} \label{f:1Dex3}
\end{center}
\end{figure}

\begin{remark}
These examples demonstrate that the behavior in the $1D$ case is very similar to the one in $2D$ and therefore suggest the existence of stable and unstable regions also for the $2D$ case. 
\end{remark}

\section*{Acknowledgment}
Sabine Hittmeir thanks the Austrian Science Fund for the support via the Hertha-Firnberg project T-764. Marie-Therese Wolfram and Helene Ranetbauer acknowledge financial support from the Austrian Academy of Sciences \"OAW via the New Frontiers Group NST-001.

\bibliography{crossing}
\bibliographystyle{plain}

\end{document}